\theoremstyle:=definition,remark,plain\do{%
        \expandafter\g@addto@macro\csname th@\theoremstyle\endcsname{%
            \addtolength\thm@preskip\parskip
            }%
        }
\z@skip \halign{\relax\hfil\txtline@@{##}\hfil\cr\leavevmode#1\crcr}}}
\theoremstyle{definition}
\newtheorem{thm}{Theorem}[section]
\newtheorem{lem}[thm]{Lemma}
\newtheorem{cor}[thm]{Corollary}
\newtheorem{defn}[thm]{Definition}
\newtheorem{propn}[thm]{Proposition}
\newtheorem*{thm*}{Theorem}
\newtheorem{notn}[thm]{Notation}
\newtheorem{setup}[thm]{Setup}
\newtheorem*{nts}{Note to self}
\theoremstyle{remark}
\newtheorem{rk}[thm]{Remark}
\newtheorem{rks}[thm]{Remarks}
\newtheoremstyle{custthm}{\parskip}{}{\normalfont}{}{\bfseries}{.}{ }{\thmname{#1} \thmnote{#3}}
\theoremstyle{custthm}
\newtheorem*{letterthm}{Theorem}
\newcommand{\tensor}[1]{\underset{#1}{\otimes}}
\newcommand{\Aut}{\mathrm{Aut}}
\newcommand{\cpi}{\mathrm{cpi}^{\overline{k\Delta^+}}}
\newcommand{\cpisum}[2]{#1|^{#2}}
\newcommand{\cp}[2]{\underset{\langle #1,#2 \rangle}{*}}
\newcommand{\Xinn}[3]{\mathrm{Xinn}_{#1}({#2}; {#3})}
\newcommand\blfootnote[1]{%
  \begingroup
  \renewcommand\thefootnote{}\footnote{#1}%
  \addtocounter{footnote}{-1}%
  \endgroup
}
\begin{document}

\numberwithin{equation}{section}
\binoppenalty=\maxdimen
\relpenalty=\maxdimen

\title{Maximal prime homomorphic images of mod-$p$ Iwasawa algebras}
\author{William Woods}
\date{\today}
\maketitle
\begin{abstract}
Let $k$ be a finite field of characteristic $p$, and $G$ a compact $p$-adic analytic group. Write $kG$ for the completed group ring of $G$ over $k$. In this paper, we describe the structure of the ring $kG/P$, where $P$ is a minimal prime ideal of $kG$. We give an explicit isomorphism between $kG/P$ and a matrix ring with coefficients in the ring $(k'G')_\alpha$, where $k'/k$ is a finite field extension, $G'$ is a large subquotient of $G$ with no finite normal subgroups, and $(-)_\alpha$ is a ``twisting" operation that preserves many desirable properties of the ring structure. We demonstrate the usefulness of this isomorphism by studying the correspondence induced between certain ideals of $kG$ and those of $(k'G')_\alpha$, and showing that this preserves many useful ``group-theoretic" properties of ideals, in particular almost-faithfulness and control by a closed normal subgroup.
\blfootnote{\emph{2010 Mathematics Subject Classification}: 16S34, 16S35, 16D25.}

\end{abstract}

\newpage
\tableofcontents

%
%
%
%
%
%

\newpage

\section*{Introduction}

Let $G$ be a compact $p$-adic analytic group \cite[Definition 8.14]{DDMS} and $R$ a commutative pseudocompact ring \cite[\S 1]{brumer}. We will write $RG$ or $R[[G]]$ for the completed group ring of $G$ over $R$,
$$RG := \underset{N}{\underleftarrow{\lim}}\; R[G/N],$$
where this inverse limit ranges over all open normal subgroups $N$ of $G$, and $R[G/N]$ denotes the ordinary group ring of the (finite) group $G/N$ over $R$. Particularly in the cases when $R = \mathbb{F}_p$ and $R = \mathbb{Z}_p$, the topological ring $RG$ is often known as the \emph{Iwasawa algebra} of $G$. We give an explicit universal property for $RG$ in Lemma \ref{defn: u.p. of group rings}; in particular, continuous $R$-valued representations of $G$ are characterised by modules over $RG$ (analogously to the defining property of $R[G]$ when $G$ is a finite group).

\textbf{Throughout this introduction, we will take $k$ to be a finite field of characteristic $p$.}

It is already known \cite[Theorem 9.2]{ardakovJAG} that $kG$ is a prime ring if and only if $G$ contains no non-trivial finite normal subgroups; however, in general, $kG$ will have nonzero minimal (two-sided) prime ideals $P$. Our main objects of study in this paper are the prime quotients $kG/P$.

Due to a result of Ardakov \cite{ardakovGMJ}, these minimal primes $P\lhd kG$ are intimately connected with the minimal primes $\mathfrak{p}\lhd k\Delta^+$, where
$$\Delta^+ = \{x\in G \,\big|\, o(x) < \infty, [G:\mathbf{C}_G(x)]<\infty\}$$
is the \emph{finite radical} of $G$, as defined in the text just before \cite[Lemma 4.1.6]{passmanASGR}. (See \cite[\S 1]{woods-struct-of-G} for the properties of $\Delta^+$ in the case when $G$ is profinite: in particular, it is the unique largest finite normal subgroup of $G$.) We review the details of this connection in Lemma \ref{lem: definition of e}.

Until now, little was known about the structure of the rings $kG/P$, and all previously known structural results were underpinned by the following special case. The following theorem assumes familiarity with the definition of a \emph{uniform} group, which may be found in \cite[Definition 4.1]{DDMS}.

\begin{thm*}{\cite[Proposition 10.1(a), Theorem 9.2]{ardakovJAG}}
Let $G$ be a compact $p$-adic analytic group. Suppose that $G = F\times U$, where $F$ is a finite group whose order is not divisible by $p$, and $U$ is a uniform group. Let $P$ be a minimal prime ideal of $kG$. Then $P = \mathfrak{p}kG$ for some prime $\mathfrak{p}$ of $k\Delta^+$; and there exist a finite field extension $k'/k$ and an integer $t\geq 1$ such that $kG/P \cong M_t(k'U)$.\qed
\end{thm*}

This theorem reduces the study of the prime spectrum of $kG$ to that of $M_t(k'U)$, and hence by Morita equivalence to that of $k'U$. This latter ring is in general far better understood, as uniform groups and their completed group rings have a particularly nice structure: see, for example, the survey paper \cite{ardakovbrown} or the text \cite{DDMS} for many results along these lines.

The results of this paper are a broad generalisation of this theorem. Given an arbitrary compact $p$-adic analytic group $G$ and a minimal prime $P$ of $kG$, our results explicitly relate the ideal lattice of $kG/P$ to that of a certain crossed product of $k'U$ with a finite group, where $U$ is a large pro-$p$ subquotient of $G$ (which may be taken to be e.g. uniform as required). We also show that this ideal correspondence preserves many desirable properties, such as primality. We will not rely on the special case above, although some elements of the proofs are similar.

\textbf{For the remainder of this introduction, we fix a compact $p$-adic analytic group $G$ with finite radical $\Delta^+$. As before, $k$ will always denote a finite field of characteristic $p$.}

First, we begin with a generalisation of the above theorem.

\begin{letterthm}[A]
Suppose that $G/\Delta^+$ is pro-$p$. Take a minimal prime $\mathfrak{M}$ of $kG$, and suppose that $\mathfrak{M}\cap k\Delta^+$ remains prime as an ideal of $k\Delta^+$. Then there exist a positive integer $t$, a finite field extension $k'/k$, and an isomorphism
\pushQED{\qed}
\begin{equation*}
\psi: kG/\mathfrak{M} \to M_t\Big( k'[[G/\Delta^+]]\Big).\qedhere
\end{equation*}
\popQED
\end{letterthm}

\vspace{-6pt}In general, $G/\Delta^+$ will not be pro-$p$. However, $G$ will always have an open normal subgroup $H$ containing $\Delta^+$ satisfying the conditions of Theorem A; and if the minimal prime $\mathfrak{M}$ of $kG$ satisfies the property that $\mathfrak{M}\cap k\Delta^+$ is prime, then it is also true that $\mathfrak{M}\cap kH$ is a minimal prime ideal of $kH$, and $(\mathfrak{M}\cap kH)kG = \mathfrak{M}$ (see Lemma \ref{lem: interpretation of G centralising e}). Then Theorem A gives us an isomorphism $\psi: kH/\mathfrak{M}\cap kH \to M_t\Big( k'[[H/\Delta^+]]\Big)$. Furthermore, we may always choose $H$ such that $H/\Delta^+$ is uniform, so that the prime spectrum of $k'[[H/\Delta^+]]$ is in theory well understood.

Unfortunately, even in the situation described in the above paragraph, the map $\psi$ may \emph{not} in general extend to an isomorphism $kG/\mathfrak{M} \to M_t\left( k'[[G/\Delta^+]]\right),$ as one might hope.

However, this guess is not too far off. Our next result shows that a similar statement does hold, provided we are willing to replace $k'[[G/\Delta^+]]$ by a closely related ring $\big(k'[[G/\Delta^+]]\big)_\alpha$. (Here, $\alpha$ is a 2-cocycle. The operation $(-)_\alpha$, which we call a \emph{central 2-cocycle twist}, deforms the ring $k'[[G/\Delta^+]]$; the resulting twisted ring can be expressed as a crossed product of $k'[[H/\Delta^+]]$ by the finite group $G/H$. We define $(-)_\alpha$ fully in Definition \ref{defn: cocycle twist}, and we then show that it preserves some desirable properties of the original ring.)

\begin{letterthm}[B]
Let $H$ be an open normal subgroup of $G$ containing $\Delta^+$, such that $H/\Delta^+$ is pro-$p$. Take a minimal prime $\mathfrak{M}$ of $kG$. Suppose that $\mathfrak{M}\cap k\Delta^+$ remains prime as an ideal of $k\Delta^+$. Then the isomorphism $\psi: kH/\mathfrak{M}\cap kH \to M_t\Big( k'[[H/\Delta^+]]\Big)$ of Theorem A extends to an isomorphism
\pushQED{\qed}
\begin{equation*}
\tilde{\psi}: kG/\mathfrak{M} \to M_t\Big( \big(k'[[G/\Delta^+]]\big)_\alpha\Big).\qedhere
\end{equation*}
\popQED
\end{letterthm}

\vspace{-6pt}Having understood $k'[[H/\Delta^+]]$, the study of the prime ideal structure of $(k'[[G/\Delta^+]])_\alpha$ now becomes amenable to finite crossed product techniques, as in \cite{passmanICP}.

Given an ideal $I$ of a completed group ring $kG$, it is often possible to associate to it a (closed normal) subgroup $H_0$ of $G$ in a way that sheds light on the nature of $I$. For example, $I$ may be the augmentation ideal of a closed normal subgroup $H_0$, or more generally $H_0$ may occur as the unfaithful part $I^\dagger$ of $I$, as defined in \cite[\S 1]{roseblade}; or else $H_0$ may contain the controller subgroup $I^\chi$ of $I$, as defined in \cite[\S 2.7]{ardakovContemp} (a careful refinement of the definition of \cite[\S 1]{roseblade}). These techniques were used in a crucial way by Roseblade in his treatise on prime ideals of polycyclic group algebras \cite{roseblade}, and play a similar role in the theory of Iwasawa algebras.

The construction of the maps $\psi$ and $\tilde\psi$ in Theorems A and B is very explicit, and these maps usually behave well with respect to closed normal subgroups. See \S\S 3.2--3.3 for details.

Studying the isomorphism of Theorem B in detail allows us to understand the behaviour of ideals of $kG$ containing $\mathfrak{M}$ by understanding ideals of $\big(k'[[G/\Delta^+]]\big)_\alpha$. We derive some consequences of Theorem B that will be useful in later work on extensions of prime ideals in such rings.

In Definition \ref{defn: dagger} below, we will say that an ideal $I$ of $kG$ is \emph{faithful} if the natural map $G\to (kG/I)^\times$ is an injection, and \emph{almost faithful} if its kernel is finite. A measure of the failure of $I$ to be faithful is given by the normal subgroup $I^\dagger := \ker(G\to (kG/I)^\times)$.

\begin{letterthm}[C]
With notation as in Theorem B, let $A$ be an ideal of $kH$ containing $\mathfrak{M}\cap kH$. Write $\psi(A/\mathfrak{M}\cap kH) = M_t(\mathfrak{a})$ for some ideal $\mathfrak{a}$ of $k'[[H/\Delta^+]]$. Then
\pushQED{\qed}
\begin{itemize}
\item[(i)] $A$ is prime in $kH$ if and only if $\mathfrak{a}$ is prime in $k'[[H/\Delta^+]]$.
\item[(ii)] $A$ is stable under conjugation by $G$ if and only if $\mathfrak{a}$ is stable under conjugation by $G/\Delta^+$ in the ring $\big(k'[[G/\Delta^+]]\big)_\alpha$.
\item[(iii)] $A$ is almost faithful as an ideal of $kH$ if and only if $\mathfrak{a}$ is (almost) faithful as an ideal of $k'[[H/\Delta^+]]$.\qedhere
\end{itemize}
\popQED
\end{letterthm}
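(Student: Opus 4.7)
The plan is to handle each part by transferring through $\psi$ (or $\tilde{\psi}$ for (ii)) to the matrix ring, and then using Morita-type correspondences to reduce to statements about $k'[[H/\Delta^+]]$ (or its central twist).

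For (i), this is immediate from Morita theory. Since $\psi$ is a ring isomorphism, it descends to $kH/A \cong M_t(k'[[H/\Delta^+]]/\mathfrak{a})$, and for any ring $R$, the matrix ring $M_t(R)$ is prime if and only if $R$ is prime.

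For (ii), the key point is that since $\mathfrak{p}$ is $G$-invariant, conjugation by elements of $G$ induces automorphisms of $kH/\mathfrak{p}kH$ which are inner in $kG/\mathfrak{p}kG$. Via $\tilde{\psi}$, these become inner automorphisms of $M_t\bigl((k'[[G/\Delta^+]])_\alpha\bigr)$ by the elements $\tilde{\psi}(g)$. Since ideals of $M_t(R)$ correspond bijectively to ideals of $R$, stability of $M_t(\mathfrak{a})$ under these automorphisms is equivalent to stability of $\mathfrak{a}$ under the corresponding automorphisms of $k'[[H/\Delta^+]] \subseteq (k'[[G/\Delta^+]])_\alpha$. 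Because $\alpha$ is a central cocycle, conjugation by $g\Delta^+$ in the twisted ring differs from ordinary group-theoretic conjugation only by a central unit, and hence preserves the same ideals. This yields (ii).

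For (iii), first observe that any finite normal subgroup of $H/\Delta^+$ lifts to a finite normal subgroup of $H$ contained in $\Delta^+$, hence is trivial; this justifies the parenthetical, since any almost-faithful ideal of $k'[[H/\Delta^+]]$ is automatically faithful. For the main equivalence, I would consider the composition $\rho: H \to (kH)^\times \to (kH/A)^\times \xrightarrow{\bar\psi} M_t(k'[[H/\Delta^+]]/\mathfrak{a})^\times$, whose kernel is exactly $A^\dagger$. From the explicit construction of $\psi$ in Theorem A, one extracts that $\psi$ restricts to an isomorphism $k\Delta^+/\mathfrak{p} \xrightarrow{\sim} M_t(k')$, and that each image $\psi(h)$ (for $h \in H$) factors as an element of $M_t(k')^\times$ times the diagonal embedding of $h\Delta^+ \in k'[[H/\Delta^+]]$. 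From this decomposition one deduces that $A^\dagger \Delta^+/\Delta^+ = \mathfrak{a}^\dagger$ as subgroups of $H/\Delta^+$. Since $\Delta^+$ is finite, $A^\dagger$ is finite if and only if $\mathfrak{a}^\dagger$ is finite, which by the previous observation is equivalent to $\mathfrak{a}^\dagger = 1$.

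The main obstacle will be in (iii): extracting from the construction of $\psi$ the precise decomposition of $\psi(h)$ into its ``matrix'' part in $M_t(k')$ and its ``group ring'' part in $k'[[H/\Delta^+]]$, and verifying that the latter is exactly the image of $h\Delta^+$. Once this structural fact is in hand, the remaining arguments are routine Morita-theoretic and group-theoretic considerations.
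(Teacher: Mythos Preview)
Your outline for (i) is fine and matches the paper. The gaps are in (ii) and (iii), and they are closely related: both hinge on precisely how $\psi$ (and $\tilde\psi$) act on group elements, and you underestimate what this costs.

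For (ii), you assert that stability of $M_t(\mathfrak{a})$ under conjugation by $\tilde\psi(g)$ is equivalent to stability of $\mathfrak{a}$ under ``the corresponding automorphisms'' of $k'[[H/\Delta^+]]$, and that these are conjugation by $g\Delta^+$. But an inner automorphism of $M_t(R)$ by a unit of a larger ring $M_t(R')$ need not descend to any automorphism of $R$ at all; you need to know that $\tilde\psi(g)$ decomposes as (element of $M_t(k')^\times$) $\times$ (diagonal copy of something acting like $g\Delta^+$). This is exactly the content of the paper's Lemma~\ref{lem: action of f-tilde}, whose proof in turn rests on the nontrivial computation $\delta(h)^g = \delta(h^g)$ (Lemma~\ref{lem: action of f commutes with delta}) --- a genuine calculation using that $H/\Delta^+$ is pro-$p$ and $k'^\times$ is $p'$. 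Your remark about $\alpha$ being central only addresses the difference between the twisted and untwisted rings, not this identification.

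For (iii), the problem is more subtle than you indicate. Even granting the decomposition $\psi(\bar h) = \delta(h)\otimes h\Delta^+$, your claimed equality $A^\dagger\Delta^+/\Delta^+ = \mathfrak{a}^\dagger$ does \emph{not} follow. Unwinding the condition $h\in A^\dagger$ (for $\mathfrak{a}$ proper) gives $\delta(h)\otimes h\Delta^+ \equiv 1\otimes 1$ in $M_t(k'N/\mathfrak{a})$; since $k'\cap\mathfrak{a}=0$, this forces \emph{both} $h\Delta^+\in\mathfrak{a}^\dagger$ \emph{and} $\delta(h)=1$. Thus $A^\dagger = \ker\delta \cap (\text{preimage of }\mathfrak{a}^\dagger)$, and your equality would require $\delta(H)=\delta(\Delta^+)$, which is not automatic. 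The paper avoids this by introducing the auxiliary subgroup $A_\delta^\dagger = \{h : \delta(h)^{-1}\bar h - e \in e\cdot\overline{A}\}$, which \emph{does} satisfy $A_\delta^\dagger/\Delta^+ = \mathfrak{a}^\dagger$ tautologically, and then links $A^\dagger$ to $A_\delta^\dagger$ via the observation that $g^m\in A^\dagger \Leftrightarrow g^m\in A_\delta^\dagger$ for $m=|\mathrm{im}\,\delta|$, combined with a torsion argument using a uniform open subgroup (Lemma~\ref{lem: delta-faithful}). This $m$-th-power trick is the missing idea in your plan.
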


We state all of these results together here for convenience. Statement (i) above is an easy consequence of Morita equivalence, but statements (ii) and (iii) rely crucially on explicit calculations under the isomorphism $\psi$.

In the general case, $\mathfrak{M}\cap k\Delta^+$ will \emph{not} necessarily be prime, but instead will be $G$-prime \cite[\S 14]{passmanICP}, and hence will equal the intersection of a (finite) $G$-orbit of primes of $k\Delta^+$ (cf. \cite[\S 5]{ardakovGMJ}). In this case, we have:

\begin{letterthm}[D]
Let $\mathfrak{M}$ be a minimal prime ideal of $kG$, and let $\mathfrak{p}$ be a minimal prime ideal of $k\Delta^+$ containing $\mathfrak{M}\cap k\Delta^+$. Let $G_1$ be the open subgroup of $G$ stabilising $\mathfrak{p}$. Then there exist an integer $r>1$, a minimal ideal $\mathfrak{N}$ of $kG_1$ with $\mathfrak{N}\cap k\Delta^+ = \mathfrak{p}$, and an isomorphism
\pushQED{\qed}
\begin{equation*}
\widetilde{\mu}: kG/\mathfrak{M} \to M_{r}\big(kG_1/\mathfrak{N}\big).\qedhere
\end{equation*}
\popQED
\end{letterthm}
\vspace{-10pt}
We prove a more precise statement of this theorem in subsection \ref{section: matrix units}, but do not state it here as the notation is rather technical. The more precise statement helps in understanding the relationship between ideals of $kG$ and ideals of $kH$, when $H$ is a closed normal subgroup of $G$. When $H$ acts transitively on the $G$-orbit of $\mathfrak{p}$, as in the special case of Theorem B, it is not hard to generalise Theorem C; but the $G$-orbit of $\mathfrak{p}$ may split into several $H$-orbits, and it is important to keep track of the isomorphism.

As an application of this, we finally prove the following result, which we will need in a future paper:

\begin{letterthm}[E]
Retain the notation of Theorem D. Let $P$ be an ideal of $kG$ containing $\mathfrak{M}$, and write $Q$ for the ideal of $kG_1$ such that $\widetilde{\mu}(P/\mathfrak{M}) = M_r(Q/\mathfrak{N}
)$. Let $H$ be any closed normal subgroup of $G$ containing $\Delta^+$, and $H_1 = H\cap G_1$. The following properties hold:
\pushQED{\qed}
\begin{itemize}
\item[(i)] $P$ is prime if and only if $Q$ is prime;
\item[(ii)] $P$ is controlled by $H$ if and only if $Q$ is controlled by $H_1$; and
\item[(iii)] $\displaystyle P^\dagger = \bigcap_{g\in G} \left(Q^\dagger\right)^g$.\qedhere
\end{itemize}
\popQED
\end{letterthm}

\vspace{-6pt}For the sake of clarity, much of the material in the main body of the paper leading up to the proofs of Theorems C and E is presented as a sequence of four separate ideal correspondences, labelled (I--IV). However, in order avoid obfuscating technicalities, we have chosen to omit mention of them in the statements of the theorems given in this introduction. We collate here, for the reader's convenience, references to the main results about these correspondences in the text.

$ $

\centerline{
\begin{tabular}{c|c|c c c}
Corresp.&Definition&Primality&Control&Faithfulness\\
\hline(I--III)&\ref{defn: corresps}&\ref{propn: corresps preserve primality}&\ref{propn: corresps preserve control}&\ref{propn: untwisted faithful primes remain faithful primes}\\
(IV)&\ref{defn: corresps 2}&\ref{propn: IV preserves primality}&\ref{propn: IV preserves control}&\ref{propn: IV preserves faithfulness}
\end{tabular}
}

\newpage
\section{Preliminaries}

\subsection{Ring-theoretic background}

\begin{defn}\cite[Introduction]{brumer}
Let $G$ be a profinite group and $k$ a commutative pseudocompact ring. Then the completed group ring $kG$ is defined to be
$$kG = \underset{N}{\underleftarrow{\lim}}\; k[G/N],$$
where the inverse limit ranges over all open normal subgroups $N$ of $G$, and $k[G/N]$ denotes the usual group ring of the (finite) group $G/N$ over $k$.
\end{defn}

\begin{rk}
When it helps to reduce ambiguity, we will write $kG$ as $k[[G]]$.
\end{rk}

Recall from \cite[1.5.2]{MR} that, for any finite group $F$, there is a natural embedding of groups $i_F: F\to (k[F])^\times$. By taking the inverse limit of the maps $\{i_{G/N}\}$, we get a continuous embedding $i: G\to (kG)^\times$.

\begin{lem}[Universal property of completed group rings]\label{defn: u.p. of group rings}
Let $G$ be a profinite group and $k$ a commutative pseudocompact ring. Then the completed group ring $kG$ satisfies the following universal property: given any pseudocompact $k$-algebra $R$ and any continuous group homomorphism $f: G\to R^\times$, there is a unique homomorphism $f^*:kG\to R$ of pseudocompact $k$-algebras satisfying $f^* \circ i = f$.
\end{lem}

\begin{proof}
Let $f: G\to R^\times$ be a continuous group homomorphism, and let $I$ be an open ideal of $R$ (hence a neighbourhood of zero) with $R/I$ artinian. Then $(I+1)\cap R^\times$ must be open in $R^\times$, and so its preimage $I^\dagger := f^{-1}((I+1)\cap R)$ must be open in $G$. Thus the map $f$ descends to a homomorphism of (abstract) groups $f_I: G/I^\dagger \to R^\times/(I+1)\cap R^\times \to (R/I)^\times$, and $G/I^\dagger$ is finite.

Now, by the universal property for (usual) group rings \cite[1.5.2]{MR}, we get a unique ring homomorphism $k[G/I^\dagger] \to R/I$ extending $f_I$, and hence a ring homomorphism $kG\to k[G/I^\dagger]\to R/I$ by definition. But as $R$ is the inverse limit of these $R/I$ \cite[Theorem 8.5(2)]{warner}, and the maps $f_I$ are all clearly compatible by uniqueness, we get a continuous ring homomorphism $kG\to R$ extending $f$.
\end{proof}

\begin{defn}[Universal property of completed tensor product]\label{defn: u.p. of tensor products} \cite[\S 2]{brumer}
Let $k$ be a commutative pseudocompact ring, and $R$ be a pseudocompact $k$-algebra. Let also $A$ be a right and $B$ a left pseudocompact $R$-module. Then the completed tensor product
$$A\hat{\tensor{R}} B$$
is a $k$-module satisfying the following universal property: there is a unique $R$-bihomo\-morphism
$$A\times B\to A\hat{\tensor{R}} B$$
through which any given $R$-bihomomorphism $A\times B\to C$ into a pseudocompact $k$-module $C$ factors uniquely. (An \emph{$R$-bihomomorphism} $\theta: A\times B\to C$ is a continuous $k$-module homomorphism satisfying $\theta(ar,b) = \theta(a,rb)$ for all $a\in A, b\in B, r\in R$.)

If $R = k$, and $A$ and $B$ are $k$-algebras, then their completed tensor product is also a $k$-algebra.
\end{defn}

We give also a construction.

\begin{lem} \cite[\S 2]{brumer}
Let $k$ be a commutative pseudocompact ring and $R$ a pseudocompact $k$-algebra. Let $A$ be a right and $B$ a left pseudocompact $R$-module: then the $k$-module defined by
$$A\hat{\tensor{R}} B := \underset{U,V}{\underleftarrow{\lim}}\;\Big( A/U \tensor{R} B/V\Big),$$
where $U$ and $V$ range over the open submodules of $A$ and $B$ respectively, satisfies the universal property for the completed tensor product of $A$ and $B$.\qed
\end{lem}

\subsection{Minimal prime ideals and centrally primitive idempotents $e$}

\textbf{Throughout this subsection}, $G$ is a compact $p$-adic analytic group and $k$ is a finite field of characteristic $p$.

Recall that the \emph{finite radical} of $G$ is defined as $$\Delta^+(G) = \{x\in G \,\big|\, o(x) < \infty, [G:\mathbf{C}_G(x)]<\infty\},$$
as in the text just before \cite[Lemma 4.1.6]{passmanASGR}. Recall also that an idempotent $e$ of a ring $R$ is \emph{centrally primitive} if $e$ is central in $R$ and $e$ cannot be written as a sum $e = f+f'$ of two nonzero orthogonal central idempotents.

We record here, for ease of reference, some facts and notation that we will use throughout this paper. The following result is due to Ardakov \cite{ardakovGMJ}.

\begin{lem}\label{lem: definition of e}
Write $J := J(k\Delta^+)$. Then $JkG$ is a two-sided ideal of $kG$ contained in the prime radical of $kG$. Denote by $\overline{(-)}$ images under the natural map $kG \to kG/JkG$.

\begin{itemize}
\item[(i)] There is a one-to-one correspondence

\centerline{
\xymatrix{
\left\{ \txt{minimal prime \\ ideals of $kG$} \right\}\ar@{<->}[r] &
\left\{ \txt{minimal prime \\ ideals of $\overline{kG}$} \right\}.
}
}

\item[(ii)] Let $X = \{e_1, \dots, e_r\}$ be a $G$-orbit of centrally primitive idempotents of $\overline{k\Delta^+}$, and write $f = e_1 + \dots + e_r$. Then $\overline{\mathfrak{M}_X} := (1-f)\overline{kG}$ is a minimal prime ideal of $\overline{kG}$, and its preimage $\mathfrak{M}_X$ in $kG$ is a minimal prime ideal in $kG$.

Conversely, let $\mathfrak{M}$ be a minimal prime ideal of $kG$. Then there exists a $G$-orbit $X$ of centrally primitive idempotents of $\overline{k\Delta^+}$ such that $\mathfrak{M} = \mathfrak{M}_X$.

This sets up a one-to-one correspondence

\centerline{
\xymatrix{
\left\{ \txt{minimal prime \\ ideals of $kG$} \right\}\ar@{<->}[r] &
\left\{ \txt{$G$-orbits of centrally \\ primitive idempotents of $\overline{k\Delta^+}$} \right\}.
}
}

\item[(iii)] Given a centrally primitive idempotent $e\in \overline{k\Delta^+}$, there exist some $t>0$ and some finite field extension $k'/k$ with $e\cdot\overline{k\Delta^+} \cong M_t(k')$. Hence, if $A$ is a $k$-algebra, we may identify the rings

\centerline{
\xymatrix{
e\cdot\overline{k\Delta^+}\tensor{k} A\ar@{<.>}[r]^-=&e\cdot\overline{k\Delta^+}\tensor{k'} (k' \tensor{k} A)\ar@{<.>}[r]^-=&M_t(k'\tensor{k} A).
}
}
\end{itemize}
\end{lem}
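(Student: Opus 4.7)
My plan is to handle the three parts of this lemma separately, with part (ii) carrying the bulk of the work.

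For part (i), I would first observe that $\Delta^+$ is a characteristic subgroup of $G$, so conjugation by $G$ induces automorphisms of $k\Delta^+$; these preserve the Jacobson radical $J$, so $J$ is $G$-invariant, and hence $JkG = (kG)J$ is a two-sided ideal of $kG$. By the structural result of \cite{woods-struct-of-G} that $\Delta^+$ is finite, $k\Delta^+$ is a finite-dimensional $k$-algebra, so $J$ is nilpotent, say $J^n = 0$. A short induction using $G$-invariance (which gives $kG \cdot J = J \cdot kG$) shows $(JkG)^n = J^n kG = 0$, so $JkG$ is nil and therefore contained in the prime radical of $kG$. The one-to-one correspondence in the statement then follows because every prime of $kG$ contains $JkG$, giving a bijection between primes of $kG$ and primes of $\overline{kG}$ which restricts to minimal primes.

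For part (ii), the ring $\overline{k\Delta^+}$ is finite-dimensional and semisimple, so its centrally primitive idempotents form a finite set, acted on by $G$ by conjugation (the action preserves the Wedderburn decomposition into simple factors). For each $G$-orbit $X$, the sum $f_X = \sum_{e \in X} e$ lies in $Z(\overline{k\Delta^+})^G$, and since $\overline{kG}$ is naturally a crossed product of $\overline{k\Delta^+}$ with $G/\Delta^+$, any $G$-invariant central element of the coefficient ring is itself central in $\overline{kG}$. This yields a ring decomposition $\overline{kG} = \prod_X f_X \overline{kG}$, so it remains to show that each factor $f_X \overline{kG}$ is prime. This will be the main obstacle. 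I would argue it by Morita-reducing along the orbit: if $G_1$ denotes the stabiliser in $G$ of $e_1$ (an open subgroup of index $r$ containing $\Delta^+$), then $f_X \overline{kG}$ is Morita equivalent via the idempotent $e_1$ to the crossed product $e_1\overline{k\Delta^+} * (G_1/\Delta^+)$ of a simple Artinian ring by a pro-$p$ group, whose primeness follows from Passman-style arguments along the lines already used by Ardakov \cite{ardakovGMJ, ardakovJAG}. Conversely, any minimal prime $M$ of $\overline{kG}$ must omit exactly one of the factors, giving a unique $X$ with $M = (1-f_X)\overline{kG}$, which establishes the bijection.

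For part (iii), since $e$ is centrally primitive in the finite-dimensional semisimple ring $\overline{k\Delta^+}$, the corner $e\cdot\overline{k\Delta^+}$ is simple Artinian, hence isomorphic to $M_t(D)$ for some division algebra $D$ finite-dimensional over $k$. Because $k$ is finite, Wedderburn's little theorem forces $D$ to be a finite field extension $k'/k$. The claimed identifications $e\cdot\overline{k\Delta^+}\tensor{k} A \;=\; M_t(k')\tensor{k} A \;=\; M_t(k'\tensor{k} A)$ are then routine tensor product manipulations, using that extension of scalars commutes with matrix rings and that $k'\tensor{k} A$ makes sense as a $k'$-algebra.
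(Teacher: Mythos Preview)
Your treatment of parts (i) and (iii) is correct and matches the paper's approach: the paper simply cites \cite[5.2]{ardakovGMJ} for (i) and invokes Wedderburn's theorem for (iii), so your more explicit arguments are fine elaborations of the same ideas.

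For part (ii), your overall strategy---decompose $\overline{kG}$ along the $G$-invariant central idempotents $f_X$, then Morita-reduce the factor $f_X\overline{kG}$ via the idempotent $e_1$ to the corner $e_1\overline{kG}e_1 = e_1\overline{kG_1} \cong e_1\overline{k\Delta^+} * (G_1/\Delta^+)$---is correct and is essentially the argument in \cite[\S 5]{ardakovGMJ}, which is all the paper cites. However, your claim that $G_1/\Delta^+$ is a pro-$p$ group is unjustified and in general false: $G_1$ is merely open in $G$, and $G/\Delta^+$ need not be pro-$p$ (take $G = GL_2(\mathbb{Z}_p)$, for instance). The primeness of the crossed product $e_1\overline{k\Delta^+} * (G_1/\Delta^+)$ does not come from a pro-$p$ hypothesis; rather, the relevant fact is that $G_1$ is open in $G$, so $\Delta^+(G_1) = \Delta^+(G) = \Delta^+$, and hence $G_1/\Delta^+$ has trivial finite radical---this is what Ardakov's argument in \cite{ardakovGMJ} actually exploits. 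Since you ultimately defer to that reference anyway, as does the paper, the gap is easily patched by replacing ``pro-$p$ group'' with ``group with trivial finite radical'' and checking that Ardakov's primeness criterion applies in that generality.
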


\begin{proof}
$ $

\begin{itemize}
\item[(i)] This follows from \cite[5.2]{ardakovGMJ}.
\item[(ii)] This follows from \cite[\S 5, in particular 5.7]{ardakovGMJ} and part (i).
\item[(iii)] $e\cdot\overline{k\Delta^+}$ is a simple finite-dimensional $k$-algebra, so the isomorphism $e\cdot\overline{k\Delta^+} \cong M_t(k')$ follows from Wedderburn's theorem. The rest is a simple calculation.\qedhere
\end{itemize}
\end{proof}

\begin{rk}\label{rk: f-M correspondence}
Retain the notation of Lemma \ref{lem: definition of e}(ii), and take a minimal prime $\mathfrak{M} = \mathfrak{M}_X$ of $kG$, so that $\overline{\mathfrak{M}} = (1-f)\overline{kG}$. As $f$ is idempotent, this implies that $f\cdot\overline{kG} = \overline{kG} / \overline{\mathfrak{M}} \cong kG/\mathfrak{M}$; also, if $I$ is an ideal of $kG$ containing $\mathfrak{M}$, then $f\cdot\overline{I} = \overline{I} / \overline{\mathfrak{M}} \cong I/\mathfrak{M}$.
\end{rk}

\begin{lem}\label{lem: unique minimal primes}
If a proper ideal $I\lhd kG$ contains a minimal prime ideal, then it contains a \emph{unique} minimal prime ideal.
\end{lem}

\begin{proof}
Suppose $I$ contains both $\mathfrak{M}_X$ and $\mathfrak{M}_{X'}$ (in the notation of Lemma \ref{lem: definition of e}(ii)), where $X$ and $X'$ are two distinct $G$-orbits of centrally primitive idempotents of $\overline{k\Delta^+}$. Continuing to denote images under the map $kG\to kG/J(k\Delta^+)kG$ by $\overline{(-)}$, we see that $\overline{I}$ contains both $\overline{\mathfrak{M}_X}$ and $\overline{\mathfrak{M}_{X'}}$.

Suppose $X = \{e_1, \dots, e_r\}$ and $X' = \{e'_1, \dots, e'_s\}$. Write $f = e_1 + \dots + e_r$ and $f' = e'_1 + \dots + e'_s$. Then $\overline{\mathfrak{M}_X} = (1-f)\overline{kG}$ and $\overline{\mathfrak{M}_{X'}} = (1-f')\overline{kG}$. In particular, $\overline{I}$ contains both $1-f$ and $1-f'$; so it contains their product, which is $1-f-f'$; and hence it contains $(1-f) + (1-f') - (1-f-f') = 1$, and so $I = R$.
\end{proof}

We will use this correspondence very often while working in the following setup, so we give it a name:

\begin{setup}\label{setup: k, G, bar}
Let $k$ be a finite field of characteristic $p$, $G$ a compact $p$-adic analytic group, $\Delta^+$ its finite radical, and $\overline{(-)}: kG\to kG/J(k\Delta^+)kG$ the natural map. Here, Lemma \ref{lem: definition of e}, Remark \ref{rk: f-M correspondence} and Lemma \ref{lem: unique minimal primes} apply, as well as Lemma \ref{lem: interpretation of G centralising e} below, and we will sometimes use these results without special mention whenever this Setup is invoked.
\end{setup}

We also define some useful notation.

\begin{notn}\label{notn: notation for e}
Assume the hypotheses of Setup \ref{setup: k, G, bar}.

Suppose the proper ideal $I\lhd kG$ contains a minimal prime ideal, and hence (by Lemma \ref{lem: unique minimal primes}) contains a \emph{unique} minimal prime ideal, say $\mathfrak{M}$. Then, under the correspondence of Lemma \ref{lem: definition of e}(ii), we obtain a unique $G$-orbit $X$ of centrally primitive idempotents of $\overline{k\Delta^+}$ corresponding to $\mathfrak{M}$.

Throughout this paper, we will write $$\cpi(I)$$ for this set $X$. Given a centrally primitive idempotent $e\in\cpi(I)$, we will write $f = \cpisum{e}{G}$ to mean $$f = \sum_{g\in \mathbf{C}_G(e)\backslash G} e^g,$$ where $\mathbf{C}_G(e)\backslash G$ denotes the (finite) set of right cosets of $\mathbf{C}_G(e)$ in $G$. In other words, if we write $e = e_1$ and $X = \{e_1, \dots, e_r\}$, then $f = \cpisum{e}{G}$ means $f = e_1 + \dots + e_r$.
\end{notn}

\begin{rk}\label{rk: assumption on G, e}
Let $\mathfrak{M}$ be a minimal prime of $kG$, and $e\in\cpi(\mathfrak{M})$ (as in Notation \ref{notn: notation for e}). For most of the rest of this paper, we will insist on the mild condition that $G$ centralise $e$. This is mostly to keep the notation simple: we will return briefly to this issue in \S \ref{section: matrix units}, and show that we have not lost much generality by doing this.

We note that this condition on $G$ and $e$ has a natural interpretation in terms of extension and contraction of primes:
\end{rk}

\begin{lem}\label{lem: interpretation of G centralising e}
Assume the hypotheses of Setup \ref{setup: k, G, bar}. Let $\mathfrak{M}$ be a minimal prime of $kG$, fix $e\in\cpi(\mathfrak{M})$, and assume that $G$ centralises $e$. Then, given any closed normal subgroup $H$ of $G$ containing $\Delta^+$, we have that $\mathfrak{M}\cap kH$ is a minimal prime ideal of $kH$, and $(\mathfrak{M}\cap kH)kG = \mathfrak{M}$.
\end{lem}

\begin{proof}
This follows trivially from the correspondence of Lemma \ref{lem: definition of e}(i), (ii), and Remark \ref{rk: f-M correspondence}.
\end{proof}

\subsection{Idempotents and the Peirce decomposition}\label{subsection: peirce}

\begin{defn}\label{defn: peirce}
Let $R$ be a ring containing mutually orthogonal nonzero idempotents $e_1, \dots, e_r\in R$ whose sum is $1$. Recall the \emph{Peirce decomposition} of $R$ with respect to this set of idempotents,
\begin{equation}\label{eqn: peirce}
R = \sum_{i,j=1}^r R_{ij},
\end{equation}
a sum of additive subgroups, where $R_{ij} := e_i R e_j$.
\end{defn}

In this subsection, we prove a basic property of the Peirce decomposition. We are certain that it is well-known, but were unable to find a reference for it.

For brevity, we will say that $A\subseteq R$ is an \emph{$\mathbf{e}$-bimodule} if it is an additive subgroup of $R$ which is closed under left and right multiplication by the idempotent $e_i$ for each $1\leq i\leq r$; and by its $(i,j)$-component we will mean the (sub-)$\mathbf{e}$-bimodule $e_i A e_j$. (Arbitrary ideals $I\lhd R$, and subrings $R'\leq R$ which contain all of the $e_i$ for $1\leq i\leq r$, are $\mathbf{e}$-bimodules.)

\begin{lem}\label{lem: peirce and e-bimods}
Let $A\subseteq R$ be an $\mathbf{e}$-bimodule, and write its $(i,j)$-component as $A_{ij}$. Then
\begin{itemize}
\item[(i)] $A = \sum_{i,j=1}^r A_{ij}$, and
\item[(ii)] $A_{ij}\cap A_{kl} = \{0\}$ for $(i,j) \neq (k,l)$.
\end{itemize}
In particular, (\ref{eqn: peirce}) is a \emph{direct} sum, as is the decomposition of $A$ in part (i) for any $\mathbf{e}$-bimodule $A$.
\end{lem}

\begin{proof}
By assumption, $A_{ij}\subseteq A$ for all $1\leq i,j\leq r$, and so
$$\sum_{i,j=1}^r A_{ij} \subseteq A.$$
Conversely, we always have
\begin{align*}
A = 1\cdot A\cdot 1 &= (e_1 + \dots + e_r) A (e_1 + \dots + e_r)\\
&\subseteq \sum_{i=1}^r \sum_{j=1}^r e_i A e_j = \sum_{i,j=1}^r A_{ij}.
\end{align*}
This establishes (i). For (ii): suppose that $e_i a e_j = e_k b e_l\in A_{ij}\cap A_{kl}$ for some $(i,j) \neq (k,l)$ and $a, b\in A$. As $(i,j)\neq (k,l)$, we have either $e_i e_k = 0$ or $e_l e_j = 0$ (or both), and so, premultiplying by $e_i$ and postmultiplying by $e_j$, we get that $e_i a e_j = 0$, which is what we wanted to show.
\end{proof}

\begin{rk}\label{rk: peirce decomp is a matrix ring}
We may introduce here the suggestive ``matrix" notation
$$R = \bigoplus_{i,j=1}^r R_{ij} =
\begin{pmatrix}
R_{11} & R_{12} & \dots & R_{1r} \\
R_{21} & R_{22} & \dots & R_{2r} \\
\vdots & \vdots & \ddots & \vdots \\
R_{r1} & R_{r2} & \dots & R_{rr}
\end{pmatrix}.$$
We will not use this notation in the rest of the paper, but it forms the intuition underlying the Peirce decomposition and some results of our \S 5: $R$ may be viewed as a generalised ``matrix ring", with entries in these additive groups $R_{ij}$. To make this precise: each $R_{ii}$ is naturally a ring with identity $1_{R_{ii}} = e_i$ under the multiplication inherited from $R$; each $R_{ij}$ is an $(R_{ii}, R_{jj})$-bimodule; and the restriction of the multiplication map on $R$, which we may view as the $(R, R)$-bimodule map $R\tensor{R_{jj}} R\to R$, gives a homomorphism of $(R_{ii}, R_{kk})$-bimodules $R_{ij}\tensor{R_{jj}} R_{jk} \to R_{ik}$ for all $i, j, k$. (Compare Lemma \ref{lem: MUL}.)
\end{rk}

\section{The untwisting theorem}\label{section: untwisting theorem}

\subsection{An untwisting map}

\begin{thm}[untwisting]\label{thm: untwisting map delta}
Let $G$ be a compact $p$-adic analytic group, $k$ a commutative pseudocompact ring, and $kG$ the associated completed group ring. Suppose $H$ is a closed normal subgroup of $G$, and $I$ is an ideal of $kH$ such that $I\cdot kG = kG\cdot I$. Write $\pi: kG\to kG/IkG$, so that $\pi(kH) = kH/I$. Suppose also that we have a continuous group homomorphism $\delta: G\to \pi(kH)^\times$ satisfying
\begin{itemize}
\item[(i)] $\delta(g) = \pi(g)$ for all $g\in H$,
\item[(ii)] $\delta(g)^{-1} \pi(g)$ centralises $\pi(kH)$ for all $g\in G$.
\end{itemize}
Then there exists an isomorphism of pseudocompact $k$-algebras
$$\Psi: \pi(kG) \to \pi(kH) \hat{\tensor{k}} k[[G/H]],$$
where $\hat\otimes$ denotes the completed tensor product.
\end{thm}

\begin{proof}
For ease of notation, we will write $\pi$ as $\overline{(-)}$ throughout the proof.

Firstly, the function $G\to \left(\overline{kH}\hat{\tensor{k}} k[[G/H]]\right)^\times$ given by
$$g\mapsto \delta(g) \otimes gH$$
is a continuous group homomorphism, and so the universal property of completed group rings of Lemma \ref{defn: u.p. of group rings} allows us to extend this function uniquely to a continuous ring homomorphism
$$\Psi': kG \to \overline{kH}\hat{\tensor{k}} k[[G/H]].$$
In the same way, we may extend $\delta$ uniquely to a map $\delta: kG\to \overline{kH}$, and by assumption (i), $\delta|_{kH}$ is just the natural quotient map $kH\to \overline{kH}$. Hence $\ker \delta$ must contain the two-sided ideal $IkG$, so that $\Psi'$ descends to a continuous ring homomorphism
$$\Psi: \overline{kG} \to \overline{kH}\hat{\tensor{k}} k[[G/H]].$$
We claim that this is the desired isomorphism. To show that $\Psi$ is an isomorphism, we will construct a continuous ring homomorphism
$$\Phi: \overline{kH}\hat{\tensor{k}} k[[G/H]]\to \overline{kG}$$
and show that $\Phi$ and $\Psi$ are mutually inverse.

Consider the continuous function $\varepsilon: G\to \overline{kG}^\times$ given by $$\varepsilon(g) = \delta(g)^{-1} \overline{g}.$$ $\varepsilon$ is a group homomorphism: indeed, for all $g, h\in G$, we have
\begin{align*}
\varepsilon(g)\varepsilon(h) &= \delta(g)^{-1} \overline{g}\, \delta(h)^{-1} \overline{h}\\
&= \boxed{\delta(g)^{-1} \overline{g}}\, \boxed{\delta(h)^{-1}}\, \overline{h}\\
&= \boxed{\delta(h)^{-1}}\, \boxed{\delta(g)^{-1} \overline{g}}\, \overline{h} & \text{by assumption (ii)}\\
&= \delta(gh)^{-1} \overline{gh} = \varepsilon(gh).
\end{align*}
By assumption (i), $\ker \varepsilon$ contains $H$, and so $\varepsilon$ descends to a continuous group homomorphism $\varepsilon: G/H \to \overline{kG}^\times$; and so again by the universal property we get a continuous ring homomorphism $\varepsilon': k[[G/H]] \to \overline{kG}$. We also clearly have a continuous inclusion $\overline{kH} \to \overline{kG}$.

These functions, and the universal property of completed tensor products of Definition \ref{defn: u.p. of tensor products}, allow us to define the desired map $\Phi: \overline{kH}\hat{\tensor{k}} k[[G/H]]\to \overline{kG}$ by
$$\Phi(x\otimes y) = x\varepsilon'(y).$$
This map $\Phi$ is bilinear in its arguments by construction; to show that it is a ring homomorphism, we need only show that
$$\Phi(x_1\otimes y_1)\Phi(x_2\otimes y_2) = \Phi(x_1x_2\otimes y_1y_2),$$
i.e. that $\varepsilon'(y_1)$ commutes with $x_2$ inside $\overline{kG}$: but this is assumption (ii).

It now remains only to check that $\Phi$ and $\Psi$ are mutually inverse. Indeed, for all $g\in G$ and $x\in \overline{kH}$,
\begin{align*}
\Phi(\Psi(\overline{g})) & = \Phi(\delta(g) \otimes gH) \\
& = \Phi(\delta(g)\otimes H) \Phi(1 \otimes gH) \\
& = (\delta(g)) (\varepsilon(g)) \\
& = \overline{g},
\end{align*}
and
\begin{align*}
\Psi(\Phi(x\otimes gH)) & = \Psi(x\varepsilon(g)) \\
& = \Psi(x\delta(g)^{-1})\Psi(\overline{g}) \\
& = (x\delta(g)^{-1}\otimes 1)(\delta(g)\otimes gH) \\
& = x\otimes gH.\qedhere
\end{align*}
\end{proof}

We now apply this to our context, for simplicity assuming that $G$ centralises $e$ throughout (see Remark \ref{rk: assumption on G, e}).

\begin{cor}\label{cor: untwisting map delta}
Assume Setup \ref{setup: k, G, bar}. Take $\mathfrak{M}$ a minimal prime of $kG$ and $e\in\cpi(\mathfrak{M})$, and assume that $G$ centralises $e$. Suppose further that we are given a continuous group homomorphism
$$\delta: G\to (e\cdot\overline{k\Delta^+})^\times,$$
satisfying

\begin{itemize}
\item[(i)] $\delta(g) = e\cdot\overline{g}$ for all $g\in\Delta^+$,
\item[(ii)] $\delta(g)^{-1} \overline{g}$ centralises $e\cdot\overline{k\Delta^+}$ for all $g\in G$.
\end{itemize}

Then there exists an isomorphism
$$\Psi: e\cdot\overline{kG} \to e\cdot\overline{k\Delta^+}\tensor{k} k[[G/\Delta^+]],$$
and hence also an isomorphism
$$\psi: e\cdot\overline{kG}\to M_t(k'[[G/\Delta^+]])$$
for some positive integer $t$ and some finite field extension $k'/k$.
\end{cor}

\begin{proof}
In the notation of the statement of Theorem \ref{thm: untwisting map delta}, take $H = \Delta^+$, and take $I$ to be the ideal of $k\Delta^+$ generated by $J(k\Delta^+)$ and $1-a$, where $a\in k\Delta^+$ is any element whose image in $\overline{k\Delta^+}$ is $e$. Hence $\pi: kG\to kG/IkG$ is precisely the natural map $kG\to e\cdot\overline{kG}$. Now, Theorem \ref{thm: untwisting map delta} gives the isomorphism $$\Psi: e\cdot\overline{kG} \to e\cdot\overline{k\Delta^+}\hat{\tensor{k}} k[[G/\Delta^+]];$$ but now, as $e\cdot\overline{k\Delta^+}$ is finite-dimensional as a vector space over $k$, \cite[Lemma 2.1(ii)]{brumer} implies that the right hand side is equal to the ordinary tensor product $$e\cdot\overline{k\Delta^+}\tensor{k} k[[G/\Delta^+]].$$ Now the isomorphism $\psi$ is given by composing $\Psi$ with the isomorphism of Lemma \ref{lem: definition of e}(iii).
\end{proof}

In order to use this result, we will need to find such a $\delta$ for a certain large class of groups $G$. We postpone doing so until \S \ref{section: finite-by-pro-p}.

\begin{rk}
Once we have found an untwisting map $\delta$ as above, this result will be a strong generalisation of the result given in \cite[10.1]{ardakovJAG}, in the case when $G \cong F\times U$, for $F$ a finite $p'$-group and $U$ a uniform group. In this case, $\Delta^+(G) = F$, and we may simply take $\delta$ to be the composite of the natural projection map $F\times U\to F = \Delta^+$ with the inclusion map $\Delta^+\hookrightarrow (e\cdot\overline{k\Delta^+})^\times$.
\end{rk}

%

\section{Properties of ideals}

\subsection{Three ideal correspondences}

At this stage, it will be useful to mention explicitly some one-to-one correspondences of ideals that we are developing.

In the below definition, we will use the following notation. Given a ring $R$ with an ideal $J$, we will write the lattice of all (two-sided) ideals of $R$ as $\mathcal{I}(R)$, and the lattice of all ideals of $R$ which contain $J$ as $\mathcal{I}_J(R)$.

\begin{defn}\label{defn: corresps} Adopt Setup \ref{setup: k, G, bar}, and let $\mathfrak{M}$ be a minimal prime ideal of $kG$ and $e\in\cpi(\mathfrak{M})$. Set $f = \cpisum{e}{G}$. We define three ideal correspondences (I), (II), (III) as follows.

\begin{itemize}
\item[(I)]
We always have the ideal correspondence
\begin{equation*}
\begin{minipage}{0.9\textwidth}
\xymatrix{
\mathcal{I}_{\mathfrak{M}}(kG)\ar@{<->}[r]&
\mathcal{I}(f\cdot\overline{kG})
}
\end{minipage}
\end{equation*}
induced by the natural quotient map $q: kG\to f\cdot\overline{kG}$ sending $I$ to $f\cdot\overline{I}$.
\item[(II)]
Suppose that $e$ is $G$-invariant, so that $f = e$, and that we have an untwisting map $\delta: G\to (e\cdot\overline{k\Delta^+})^\times$ satisfying the hypotheses of Corollary \ref{cor: untwisting map delta}. Let $e\cdot\overline{k\Delta^+} \cong M_t(k')$ for some integer $t\geq 1$ and some finite field extension $k'/k$ as in Lemma \ref{lem: definition of e}(iii). Then we have the ideal correspondence
\begin{equation*}
\begin{minipage}{0.9\textwidth}
\xymatrix{
\mathcal{I}(e\cdot\overline{kG}) \ar@{<->}[r]&
\mathcal{I}\left(M_t(k'[[G/\Delta^+]])\right)
}
\end{minipage}
\end{equation*}
induced by the map $\psi$ of Corollary \ref{cor: untwisting map delta}, sending $e\cdot\overline{I}$ to $\psi(e\cdot\overline{I}) = M_t(\mathfrak{i})$ for some ideal $\mathfrak{i}\lhd k'[[G/\Delta^+]]$.
\item[(III)]
We always have the ideal correspondence
\begin{equation*}
\begin{minipage}{0.9\textwidth}
\xymatrix{
\mathcal{I}\left(M_t(k'[[G/\Delta^+]])\right) \ar@{<->}[r]&
\mathcal{I}\left(k'[[G/\Delta^+]]\right)
}
\end{minipage}
\end{equation*}
given by Morita equivalence, sending $M_t(\mathfrak{i})$ to $\mathfrak{i}$: see \cite[3.5.5]{MR}.
\end{itemize}
\end{defn}

The remainder of this section is dedicated to showing that these three correspondences preserve certain desirable properties of ideals.

\subsection{Inclusion and primality}

\begin{propn}\label{propn: corresps preserve primality}
The correspondences (I--III) of Definition \ref{defn: corresps} are order-preserving maps of posets, and send prime ideals to prime ideals.
\end{propn}

\begin{proof}
$ $

\begin{itemize}
\item[(I)] Recall from Remark \ref{rk: f-M correspondence} that $f\cdot\overline{kG} \cong kG/\mathfrak{M}$. Now the claim reduces to standard ``isomorphism theorems".
\item[(II)] $\psi$ is an isomorphism.
\item[(III)] This follows from \cite[3.5.5, 3.5.9]{MR}.\qedhere
\end{itemize}
\end{proof}

\subsection{Control by subgroups}

Recall the following standard definition.

\begin{defn}\label{defn: control}
Let $I$ be an ideal of the ring $R$. We say that $I$ is \emph{controlled by} the subring $S$ if $(I\cap S)R = I$.

Following Roseblade \cite[1.1]{roseblade}: let $k$ be a commutative pseudocompact ring, $G$ a compact $p$-adic analytic group and $H$ a closed normal subgroup of $G$. If the ideal $I\lhd kG$ is controlled by the subring $kH$, we say $I$ is \emph{controlled by} the subgroup $H$.
\end{defn}

In other words, to understand ideals of $R$ which are controlled by $S$, we need only understand ideals of $S$. This is helpful, as in many cases of interest $S$ can often be taken to be a smaller ring, or one whose ideals are better understood: this was crucial in Roseblade's work \cite{roseblade} on the prime ideals of group algebras of polycyclic groups. One recent important result in the area of Iwasawa algebras along these lines includes \cite[Theorem A(b)]{ardakovInv}.

It is natural to ask how the notion of ``control" is preserved under correspondences (I--III). The next two lemmas answer this question.

\begin{lem}\label{lem: reduction to quotient of kG}
Adopt Setup \ref{setup: k, G, bar}. Let $I$ be an ideal of $kG$ containing a prime ideal, and write $e = \cpi(I), f = \cpisum{e}{G}$ as in Notation \ref{notn: notation for e}. Let $H$ be any closed normal subgroup of $G$ containing $\Delta^+$. Then the following are equivalent:

\begin{itemize}
\item[(i)] $(I\cap kH)kG = I$,
\item[(ii)] $(\overline{I} \cap \overline{kH}) \overline{kG} = \overline{I}$,
\item[(iii)] $(f\cdot\overline{I} \cap f\cdot\overline{kH}) f\cdot\overline{kG} = f\cdot\overline{I}$.
\end{itemize}
\end{lem}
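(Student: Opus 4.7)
The plan is to establish (a)$\Leftrightarrow$(b) by passing to $\overline{kG} = kG/JkG$, and then (b)$\Leftrightarrow$(c) by splitting off the complementary central idempotent $1-f$. Two observations underpin both parts. First, since $P$ contains the minimal prime $M$ and $JkG$ lies in the prime radical by Lemma~\ref{lem: definition of e}(i), we have $JkG \subseteq M \subseteq P$; moreover $\Delta^+ \subseteq H$ forces $J \subseteq kH \cap P$, so in fact $JkG \subseteq (P\cap kH)kG$ as well. Second, $f = \cpisum{e}{G}$ is $G$-invariant (conjugation by $G$ permutes the orbit $\{e_1,\dots,e_r\}$), hence central in $\overline{kG}$, and it lies in $\overline{k\Delta^+} \subseteq \overline{kH}$.

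For (a)$\Leftrightarrow$(b), I would first check that the image of $P\cap kH$ along $kH\hookrightarrow kG\twoheadrightarrow\overline{kG}$ is precisely $\overline{P}\cap\overline{kH}$. The inclusion $\subseteq$ is immediate; for $\supseteq$, lift any $\bar x\in\overline{P}\cap\overline{kH}$ to some $h\in kH$ and observe that $h$ differs from an element of $P$ by something in $JkG\subseteq P$, so $h\in P\cap kH$. Given this identification, reducing (a) modulo $JkG$ yields (b); conversely, lifting (b) produces $(P\cap kH)kG + JkG = P$, and combining with $JkG\subseteq(P\cap kH)kG$ gives (a).

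For (b)$\Leftrightarrow$(c), centrality of $f$ yields the orthogonal decompositions $\overline{kG} = f\overline{kG}\oplus(1-f)\overline{kG}$ and $\overline{kH} = f\overline{kH}\oplus(1-f)\overline{kH}$. Since $\overline{M} = (1-f)\overline{kG}\subseteq\overline{P}$, we also obtain $\overline{P} = f\overline{P}\oplus(1-f)\overline{kG}$. A short check using $(1-f)^2 = 1-f$ shows $f\overline{kG}\cap\overline{kH} = f\overline{kH}$ and $(1-f)\overline{kG}\cap\overline{kH} = (1-f)\overline{kH}$, whence $\overline{P}\cap\overline{kH} = (f\overline{P}\cap f\overline{kH})\oplus(1-f)\overline{kH}$. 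Multiplying by $\overline{kG}$, the $(1-f)$-summand on each side of (b) is automatically $(1-f)\overline{kG}$, so (b) reduces to equality of the $f$-summands, which is precisely (c).

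There is no deep obstacle here: the lemma is essentially the formal observation that, modulo $JkG$, the minimal prime $\overline{M}$ is generated by the orthogonal central idempotent $1-f$. The only care needed is the verification that intersections with $\overline{kH}$ respect the $f$-decomposition, which is where centrality of $f$ in the larger ring $\overline{kG}$ is used.
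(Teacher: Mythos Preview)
Your proof is correct and follows essentially the same approach as the paper's. The paper invokes the modular law explicitly (using $JkG\subseteq P$ to get $(P\cap kH)+JkG = P\cap(kH+JkG)$, and then $(1-f)\overline{kG}\subseteq\overline{P}$ for the second step), while you carry out the same reductions by a direct lifting argument and by splitting along the central idempotent $f$; these are the same arguments, with the modular law unwound.
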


\begin{proof}
Write $A := J(k\Delta^+)kG$, and write $B$ for the full preimage in $kG$ of $(1-f)\overline{kG}$. Then $A\subseteq B\subseteq I\cap k\Delta^+$ by Lemma \ref{lem: definition of e}.

$\boxed{\text{(i)} \Leftrightarrow \text{(ii)}}$
By the modular law, $(I\cap kH) + A = I\cap (kH + A)$, from which we can deduce that
$\overline{I\cap kH} = \overline{I} \cap \overline{kH},$
and so
$\overline{(I\cap kH)kG} = (\overline{I} \cap \overline{kH})\overline{kG}.$

Now, suppose that $(I\cap kH)kG = I$. Then the above argument shows that we may quotient out by $A$ on both sides to get $(\overline{I} \cap \overline{kH})\overline{kG} = \overline{I}$.

Conversely, suppose that $(\overline{I} \cap \overline{kH}) \overline{kG} = \overline{I}$. Then, taking preimages in $kG$ of both sides, we get $(I\cap kH)kG + A = I + A$. But both $I$ and $(I\cap kH)kG$ already contain $A$, so we can deduce that $(I\cap kH)kG = I$.

$\boxed{\text{(i)} \Leftrightarrow \text{(iii)}}$
The proof is identical to the previous case on replacing $A$ by $B$.
\end{proof}

\begin{lem}\label{lem: matrix-control}
Let $R$ be a ring, $I$ an ideal of $R$, and $S$ a subring of $R$. Let $t$ be a positive integer. Then the following are equivalent:
\begin{itemize}
\item[(i)] $(I\cap S)R = I$,
\item[(ii)] $(M_t(I) \cap M_t(S))M_t(R) = M_t(I)$.
\end{itemize}
\end{lem}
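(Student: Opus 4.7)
The proof plan is to reduce both sides to statements about matrix entries, using two standard identities that unpack the operations $\cap$ and $R$-multiplication entry-wise.

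First I would record the straightforward identity $M_t(I)\cap M_t(S) = M_t(I\cap S)$, which is immediate from the fact that membership in $M_t(X)$ is checked entry-wise: a matrix lies in both $M_t(I)$ and $M_t(S)$ iff each of its entries lies in $I\cap S$.

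Next I would prove the key identity $M_t(J)\cdot M_t(R) = M_t(JR)$ for any additive subgroup $J\subseteq R$ (where $R$ is assumed unital, as is standard here). The inclusion $\subseteq$ is immediate from the entrywise formula $(AB)_{ij}=\sum_k A_{ik}B_{kj}\in JR$. For the reverse inclusion, use the matrix units $E_{ij}\in M_t(R)$: any element of $JR$ has the form $\sum_\ell a_\ell b_\ell$ with $a_\ell\in J$, $b_\ell\in R$, and for fixed $i,j$ we can write
\[ \Big(\sum_\ell a_\ell b_\ell\Big) E_{ij} = \sum_\ell (a_\ell E_{ij})(b_\ell E_{jj}) \in M_t(J)\cdot M_t(R), \]
using $E_{ij}E_{jj}=E_{ij}$. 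Any matrix in $M_t(JR)$ is a sum of such expressions over $i,j$.

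Combining the two identities, the left-hand side of (ii) becomes
\[ (M_t(I)\cap M_t(S))\,M_t(R) = M_t(I\cap S)\,M_t(R) = M_t\bigl((I\cap S)R\bigr), \]
while the right-hand side is $M_t(I)$. Since $M_t(\,\cdot\,)$ is injective on subsets of $R$ (again by entry-wise inspection), the equality of these two matrix subsets is equivalent to $(I\cap S)R = I$, giving (i)$\Leftrightarrow$(ii).

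The only mildly subtle point is the second identity; everything else is bookkeeping on matrix entries, and no obstacle is really expected since the matrix units provide an explicit decomposition.
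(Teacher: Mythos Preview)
Your proof is correct and takes essentially the same route as the paper: both arguments hinge on the matrix-unit identity that reduces $M_t(J)\,M_t(R)$ to $M_t(JR)$, together with the entrywise fact $M_t(I)\cap M_t(S)=M_t(I\cap S)$. The paper packages this slightly differently---proving (i)$\Rightarrow$(ii) via a chain of inclusions using the diagonal embedding $R\hookrightarrow M_t(R)$, and (ii)$\Rightarrow$(i) by intersecting with that diagonal copy---whereas you prove the two identities once and get both directions simultaneously from the injectivity of $M_t(-)$; but the content is the same.
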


\begin{proof}
Write $d: R\to M_t(R)$ for the natural diagonal map $r\mapsto rI$, where $I$ is the identity element of $M_t(R)$. First, we will show that $(M_t(I) \cap M_t(S))M_t(R) = M_t((I\cap S)R)$.

It is immediate that $M_t(I)\cap M_t(S) = M_t(I\cap S)$.

Next, in order to show that $M_t((I\cap S)R) = M_t(I\cap S)d(R)$, we note simply that, for any $r\in R$ and elementary matrix $E_{ij}$ as above, we have $rE_{ij} = E_{ij}d(r)$. So, given some $x\in M_t((I\cap S)R)$, we may write
$$x = \sum_{i,j=1}^t x_{ij} E_{ij},$$
with each $x_{ij} \in (I\cap S)R$, so that
$$x_{ij} = \sum_{k=1}^{n_{ij}} y_{ijk} r_{ijk},$$
with each $y_{ijk}\in I\cap S$, each $r_{ijk}\in R$ and each $n_{ij}$ some positive integer. Now, reordering the factors of each product as above, we see that
$$x = \sum_{i,j,k} (y_{ijk} E_{ij}) d(r_{ijk})$$
is an element of $M_t(I\cap S)d(R)$; the converse is similar.

Finally, the inclusion $M_t(I\cap S)d(R) \subseteq M_t(I\cap S)M_t(R)$ is trivial; and the reverse inclusion is proved using a similar trick to the above, i.e. any element $x\in M_t(R)$ can be written as
$$x = \sum_{i,j=1}^t z_{ij} E_{ij} = \sum_{i,j=1}^t E_{ij} d(z_{ij})$$
for some $z_{ij}\in R$, and $M_t(I\cap S)E_{ij} \subseteq M_t(I\cap S)$. This establishes the equality.

In the following, we identify $R$, $S$, $I$, etc. with their images under $d$.

$\boxed{\text{(i)} \Rightarrow \text{(ii)}}$ It is straightforward to check that
$$I\cdot M_t(R) \subseteq M_t(I),$$
and conversely if $a_{kl} \in I$ for all $1 \leq k, l \leq t$ then
$$(a_{ij})_{i,j} = \sum_{k,l} a_{kl} E_{kl}$$
where $E_{kl}\in M_t(R)$ is the elementary matrix with $(i,j)$-entry $\delta_{ik} \delta_{jl}$. So we see that $I\cdot M_t(R) = M_t(I)$. Now
\begin{align*}
M_t(I) = I\cdot M_t(R) & = (I\cap S)R\cdot  M_t(R) & \text{by assumption (i)} \\
&= (I\cap S)M_t(R)\\
&\subseteq (M_t(I) \cap M_t(S)) M_t(R)\\
&= M_t((I\cap S)R) & \text{by the above}\\
&= M_t(I)& \text{by assumption (i),}
\end{align*}
and so these are all equal.

$\boxed{\text{(ii)} \Rightarrow \text{(i)}}$  By intersecting both sides of the equality $M_t((I\cap S)R) = M_t(I)$ with the subring $d(R)$, which we naturally identify with $R$, we see that assumption (ii) implies $(I\cap S)R = I$.
\end{proof}

In summary:

\begin{propn}\label{propn: corresps preserve control}
Adopt Setup \ref{setup: k, G, bar} and the notation of Definition \ref{defn: corresps}, and recall the correspondences (I--III) there. Let $H$ be a closed normal subgroup of $G$ containing $k\Delta^+$.
\begin{itemize}
\item[(I)] Let $I$ and $f\cdot\overline{I}$ be corresponding ideals as in Definition \ref{defn: corresps}(I). Then $I$ is controlled by $kH$ if and only if $f\cdot\overline{I}$ is controlled by $f\cdot\overline{kH}$.
\item[(II)] Assume that $G$ centralises $e$ and we have a map $\delta:G\to (e\cdot\overline{k\Delta^+})^\times$ satisfying the hypotheses of Corollary \ref{cor: untwisting map delta}. Let $e\cdot\overline{I}$ and $M_t(\mathfrak{i})$ be corresponding ideals as in Definition \ref{defn: corresps}(II). Then $e\cdot\overline{I}$ is controlled by $e\cdot\overline{kH}$ if and only if $M_t(\mathfrak{i})$ is controlled by $M_t(k'[[H/\Delta^+]])$.
\item[(III)] Let $M_t(\mathfrak{i})$ and $\mathfrak{i}$ be corresponding ideals as in Definition \ref{defn: corresps}(III). Then $M_t(\mathfrak{i})$ is controlled by $M_t(k'[[H/\Delta^+]])$ if and only if $\mathfrak{i}$ is controlled by $k'[[H/\Delta^+]]$.
\end{itemize}
\end{propn}

\begin{proof}
(I) follows from Lemma \ref{lem: reduction to quotient of kG}. (II) follows from the easy calculation that $\psi(e\cdot\overline{kH}) = M_t(k'[[H/\Delta^+]])$, as $\psi$ is a ring isomorphism. (III) follows from Lemma \ref{lem: matrix-control}.
\end{proof}

\subsection{Faithfulness}

\begin{defn}\label{defn: dagger}
Let $k$ be a commutative pseudocompact ring and $G$ a compact $p$-adic analytic group. Again following Roseblade \cite{roseblade}, for any ideal $I$ of $kG$, we define $$I^\dagger = \{x\in G \,\big|\, x-1 \in I\}.$$ This is the kernel of the natural continuous group homomorphism $G \to (kG/I)^\times$, and so is a closed normal subgroup of $G$. If $I^\dagger = 1$, we say that $I$ is a \emph{faithful} ideal. It also often occurs that $I^\dagger$ is finite, i.e. $I^\dagger\leq \Delta^+(G)$; in this case, we say that $I$ is \emph{almost faithful}.
\end{defn}

\begin{rk}
Note that the image of $I$ under the map $kG\to k[[G/I^\dagger]]$ will be a faithful ideal. Hence, when trying to understand ideals of $kG$ for a certain class of groups $G$ closed under quotients by closed normal subgroups of the form $I^\dagger$, we may usually reduce the problem to one of \emph{faithful} ideals.
\end{rk}

In this subsection, we show that some appropriate notion of (almost-)faithfulness is also preserved by the correspondences of Definition \ref{defn: corresps}.

\textbf{Throughout this subsection, adopt Setup \ref{setup: k, G, bar}, and fix the following notation throughout.}
\begin{itemize}
\item Let $\mathfrak{M}$ be a minimal prime ideal of $kG$, and fix $e\in\cpi(\mathfrak{M})$.
\item Suppose that $G$ centralises $e$, and write $q: kG\to e\cdot\overline{kG}$ for the natural quotient map.
\item Let $e\cdot\overline{k\Delta^+} \cong M_t(k')$ for some integer $t>1$ and some finite field extension $k'/k$ as in Lemma \ref{lem: definition of e}(iii).
\item Suppose that we have an untwisting map $\delta: G\to (e\cdot\overline{k\Delta^+})^\times$ satisfying the hypotheses of Corollary \ref{cor: untwisting map delta}, so that the correspondences (I--III) of Definition \ref{defn: corresps} hold. Continue to write $$\Psi: e\cdot\overline{kG} \to e\cdot\overline{k\Delta^+}\tensor{k} k[[G/\Delta^+]]$$ and $$\psi: e\cdot\overline{kG}\to M_t(k'[[G/\Delta^+]])$$ for the isomorphisms defined there.
\end{itemize}

Now let $A$ be an arbitrary ideal of $kG$ containing $\mathfrak{M}$. Then, by correspondences (I--III) of Definition \ref{defn: corresps}, we have
$$\psi\circ q(A) = M_t(\mathfrak{a}),$$
where $\mathfrak{a}$ is some ideal of $k'[[G/\Delta^+]]$. We intend to show that the groups $A^\dagger$ (a closed normal subgroup of $G$) and $\mathfrak{a}^\dagger$ (a closed normal subgroup of $G/\Delta^+$) are closely related.

$e\cdot\overline{k\Delta^+}$ is a $k'$-algebra, so let us abuse notation to identify the two rings $$e\cdot\overline{k\Delta^+}\tensor{k} k[[G/\Delta^+]] = e\cdot\overline{k\Delta^+}\tensor{k'} k'[[G/\Delta^+]]$$ in the obvious way. Then, laying out the structure more explicitly, we have
$$\Psi(e\cdot\overline{A}) = e\cdot\overline{k\Delta^+}\tensor{k'} \mathfrak{a}$$
in the notation of Corollary \ref{cor: untwisting map delta}.

Suppose that $g\Delta^+ \in \mathfrak{a}^\dagger$ for some $g\in G$, i.e. $g\Delta^+ - 1_G\Delta^+ \in \mathfrak{a}$. Then $$(1\otimes g\Delta^+) - (1\otimes \Delta^+) \in \Psi(e\cdot\overline{A}),$$ so $$\Phi(1\otimes g\Delta^+) - \Phi(1\otimes \Delta^+) = \delta(g)^{-1}\overline{g} - e \in e\cdot\overline{A}.$$ This motivates the following definition:

\begin{defn}\label{defn: delta-dagger}
Write $$A_\delta^\dagger = \{g\in G \,\big|\, \delta(g)^{-1} \overline{g} - e \in e\cdot\overline{A}\},$$ so that $A_\delta^\dagger$ is the kernel of the composite map 

\centerline{
\xymatrix{
G\ar[r]^-\varepsilon& (e\cdot\overline{kG})^\times\ar[r]^\sim & (kG/\mathfrak{M})^\times\ar@{->>}[r]& (kG/A)^\times,
}
}

where $\varepsilon: G\to (e\cdot\overline{kG})^\times$ is defined by $\varepsilon(g) = \delta(g)^{-1}\overline{g}$. (As we saw in the proof of Theorem \ref{thm: untwisting map delta}, $\varepsilon$ is a continuous group homomorphism.) Compare this with Definition \ref{defn: dagger}: this is a ``twisted" version of $A^\dagger$.

Now, since $\varepsilon(g) = 1$ for all $g\in \Delta^+$, we have $\Delta^+ \leq A_\delta^\dagger$ for any ideal $A$. We say that $A$ is \emph{$\delta$-faithful} if $A_\delta^\dagger = \Delta^+$ (and $A$ is \emph{$\delta$-unfaithful} if $A_\delta^\dagger$ is infinite).
\end{defn}

\begin{lem}\label{lem: delta-faithful}
The following are equivalent for any ideal $A$ of $kG$ containing $\mathfrak{M}$:
\begin{itemize}
\item[(i)] $A$ is almost faithful (as an ideal of $kG$).
\item[(ii)] $A$ is $\delta$-faithful.
\item[(iii)] $\mathfrak{a}$ is faithful (as an ideal of $k'[[G/\Delta^+]]$).
\end{itemize}
\end{lem}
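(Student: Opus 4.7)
The plan is to prove the two equivalences (ii) $\iff$ (iii) and (i) $\iff$ (ii) separately. The first will follow from the explicit description of $\Psi$ together with a transparent tensor-product calculation, while the second will rely on continuity of $\delta$ combined with the facts that $\Delta^+$ is finite for a compact $p$-adic analytic group and that it absorbs every finite normal subgroup of $G$.

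For (ii) $\iff$ (iii): one direction is essentially the motivating calculation given just before Definition \ref{defn: delta-dagger}, which shows that if $g\Delta^+ \in \mathfrak{p}^\dagger$ then $\delta(g)^{-1}\overline{g} - e \in e\cdot\overline{P}$, i.e.\ $g \in P_\delta^\dagger$. For the converse, I would start from $\varepsilon(g) - e \in e\cdot\overline{P}$, apply $\Psi$ to transfer into the tensor product, and use the identification $\Psi(e\cdot\overline{P}) = e\cdot\overline{k\Delta^+}\otimes_{k'}\mathfrak{p}$ already set up in the section preamble. Choosing a $k'$-basis of $e\cdot\overline{k\Delta^+}$ containing the identity element $e$, one reads off that the $e$-component of $1\otimes(g\Delta^+ - 1)$ is $g\Delta^+ - 1$ and all other components vanish, forcing $g\Delta^+ - 1\in\mathfrak{p}$. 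This shows that $P_\delta^\dagger$ is precisely the preimage of $\mathfrak{p}^\dagger$ under $G\twoheadrightarrow G/\Delta^+$, and since $\Delta^+$ is always contained in $P_\delta^\dagger$, the statement $P_\delta^\dagger = \Delta^+$ is equivalent to $\mathfrak{p}^\dagger = 1$.

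For (i) $\iff$ (ii): the key observation is that $(e\cdot\overline{k\Delta^+})^\times$ is the unit group of the finite ring $M_t(k')$, so the continuous homomorphism $\delta$ has open kernel $N$, which therefore has finite index in $G$. On $N$, by construction, $\varepsilon$ agrees with the natural quotient map, so $N\cap P^\dagger = N\cap P_\delta^\dagger$; as $[G:N]<\infty$, this forces $P^\dagger$ and $P_\delta^\dagger$ to be simultaneously finite or infinite. To conclude, I would use two further facts: (a) any finite normal subgroup of $G$ lies in $\Delta^+$ (its elements are automatically torsion with only finitely many conjugates), and (b) $\Delta^+$ is itself finite for compact $p$-adic analytic $G$. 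Since $P_\delta^\dagger\supseteq\Delta^+$ always holds, (a) and (b) together give that $P_\delta^\dagger$ is finite if and only if $P_\delta^\dagger = \Delta^+$, closing the chain of equivalences. The only real bookkeeping is the tensor-product identification used in (ii) $\iff$ (iii); no deeper obstacle arises, as all the structural work has already been done in Theorem \ref{thm: untwisting map delta} and Corollary \ref{cor: untwisting map delta}.
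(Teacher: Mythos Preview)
Your proof is correct. The argument for (ii) $\Leftrightarrow$ (iii) is essentially the paper's: both directions come from the computation $\Psi(\varepsilon(g)-e)=e\otimes(g\Delta^+-1)$, and your basis-of-$e\cdot\overline{k\Delta^+}$ remark makes explicit what the paper leaves implicit.

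For (i) $\Leftrightarrow$ (ii) you take a genuinely different route. The paper sets $m=|\mathrm{im}\,\delta|$ and proves the elementwise equivalence $g^m\in P^\dagger\Leftrightarrow g^m\in P_\delta^\dagger$, so that $(P^\dagger)^m=(P_\delta^\dagger)^m$; it then runs a torsion argument (if $P^\dagger$ is finite then $P_\delta^\dagger$ is torsion) and appeals to an open uniform subgroup $U$ to force a torsion normal subgroup to be finite. You instead pass to $N=\ker\delta$, observe that on $N$ the maps $g\mapsto \overline g$ and $g\mapsto\varepsilon(g)$ into $(e\cdot\overline{kG})^\times$ coincide, and hence $N\cap P^\dagger=N\cap P_\delta^\dagger$; since $[G:N]<\infty$ this immediately gives that $P^\dagger$ is finite iff $P_\delta^\dagger$ is finite. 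Your argument is shorter and avoids both the torsion detour and the use of a uniform open subgroup; the paper's version, on the other hand, never needs to name $\ker\delta$ and works entirely with the image. Both conclude in the same way, using that $\Delta^+$ is finite and absorbs every finite normal subgroup to upgrade ``$P_\delta^\dagger$ finite'' to ``$P_\delta^\dagger=\Delta^+$''.
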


\begin{proof}
$ $

$\boxed{\text{(ii)} \Leftrightarrow \text{(iii)}}$
By the above calculation, we see that $A$ is defined to be a $\delta$-faithful ideal of $kG$ precisely when $\mathfrak{a}$ is a faithful ideal of $k'[[G/\Delta^+]]$.

$\boxed{\text{(i)} \Leftrightarrow \text{(ii)}}$
Let $m = |\mathrm{im}(\delta)|$. Note that $m < \infty$ as $k$ is assumed to be a finite field. Then $\delta(g^m) = e$ for all $g\in G$, so

\begin{align*}
g^m \in A_\delta^\dagger & \Leftrightarrow \delta(g^m)\overline{g^m}^{-1} - e \in e\cdot\overline{A} \\
& \Leftrightarrow e\overline{g^m}^{-1} - e \in e\cdot\overline{A} \\
& \Leftrightarrow e (\overline{g^m}^{-1} - 1) \in e\cdot\overline{A} \\
& \Leftrightarrow \overline{g^m}^{-1} - \overline{1} \in \overline{A} \\
& \Leftrightarrow g^{-m} - 1 \in A + JkG = A,
\end{align*}

so writing $(A^\dagger)^m := \langle g^m | g\in A^\dagger\rangle$, and likewise $(A_\delta^\dagger)^m$, we see that these two subgroups are equal.

Now, suppose $A$ is almost faithful, and so in particular $A^\dagger$ is torsion; then the subgroup $(A^\dagger)^m = (A_\delta^\dagger)^m$ is also torsion, and since $g^m$ is torsion for any $g\in A_\delta^\dagger$, we have that $g$ must also be torsion. So $A_\delta^\dagger$ is a torsion subgroup of $G$. Hence it must be finite: indeed, given any open normal uniform subgroup $U$ of $G$, the kernel of the composite map $A_\delta^\dagger \hookrightarrow G \to G/U$ is a subgroup of $U\cap A_\delta^\dagger$, which is trivial as $U$ is torsion-free \cite[4.5]{DDMS}. So $A_\delta^\dagger$ embeds into the finite group $G/U$, and as $A_\delta^\dagger$ is also normal in $G$ by Definition \ref{defn: delta-dagger}, it is a finite orbital subgroup of $G$ and hence must be a subgroup of $\Delta^+$, i.e. $A$ is $\delta$-faithful. The converse is similar.
\end{proof}

In summary:

\begin{propn}\label{propn: untwisted faithful primes remain faithful primes}
Adopt the notation of Definition \ref{defn: corresps}, and recall the correspondences (I--III) there. Suppose that $e,\mathfrak{M}$ and $\delta$ are as in Definition \ref{defn: corresps}(II), so that all three correspondences (I--III) hold. Let $A$ be an ideal of $kG$ containing $\mathfrak{M}$, and let $\psi\circ q(A) = M_t(\mathfrak{a})$, so that
\begin{equation*}
\begin{minipage}{0.9\textwidth}
\xymatrix{
A \ar@{<->}[r]^-{\scriptsize \txt{(I)}}&
e\cdot\overline{A}\ar@{<->}[r]^-{\scriptsize \txt{(II)}}&
M_t(\mathfrak{a}) \ar@{<->}[r]^-{\scriptsize \txt{(III)}}&
\mathfrak{a}
}
\end{minipage}
\end{equation*}
is a corresponding sequence of ideals. Then $A$ is almost faithful if and only if $\mathfrak{a}$ is faithful.
\end{propn}

\begin{proof}
This follows from Lemma \ref{lem: delta-faithful}(i)$\Leftrightarrow$(iii).
\end{proof}

\section{Untwisting quotients by minimal primes}

\subsection{An untwisting map when $G$ is finite-by-(pro-$p$) and $e$ is $G$-invariant}\label{section: finite-by-pro-p}

\textbf{Throughout this subsection, unless stated otherwise, we adopt Setup \ref{setup: k, G, bar}.} We also take $\mathfrak{M}$ to be a minimal prime of $kG$, and fix $e\in\cpi(\mathfrak{M})$, which we assume is centralised by $G$. It follows that $e\cdot\overline{k\Delta^+} \cong M_t(k')$ by Lemma \ref{lem: definition of e}(iii). In particular, $e\cdot\overline{k\Delta^+}$ is a simple $k'$-algebra, and so its automorphisms are all inner by the Skolem-Noether theorem \cite[\href{http://stacks.math.columbia.edu/tag/074P}{Tag 074P}]{stacks-project}.

In this subsection, we introduce a group $A_H$ for each closed subgroup $H$ of $G$. By studying the structure of the group $A_G$ in the special case when $G/\Delta^+$ is pro-$p$, we will find an untwisting map $\delta: G\to (e\cdot\overline{k\Delta^+})^\times$ satisfying the conditions of Corollary \ref{cor: untwisting map delta} in this case.

Note that both $G$ and $(e\cdot\overline{k\Delta^+})^\times$ act on the ring $e\cdot\overline{k\Delta^+}$ by conjugation, and so we get group homomorphisms $G\to \mathrm{Inn}(e\cdot\overline{k\Delta^+})$ and $(e\cdot\overline{k\Delta^+})^\times \to \mathrm{Inn}(e\cdot\overline{k\Delta^+})$.

\begin{defn}\label{defn: fibre products A_H}
For any closed subgroup $H \leq G$, define $A_H$ to be the fibre product of $H$ and $(e\cdot\overline{k\Delta^+})^\times$ over $\mathrm{Inn}(e\cdot\overline{k\Delta^+})$ with respect to the above maps,
$$A_H = (e\cdot\overline{k\Delta^+})^\times \underset{\mathrm{Inn}(e\cdot\overline{k\Delta^+})}{\times} H,$$
a subgroup of $(e\cdot\overline{k\Delta^+})^\times \times H$. Write the projection map onto the second factor as $\pi_H: A_H \to H$.
\end{defn}

As $e\cdot\overline{k\Delta^+} \cong M_t(k')$, we have $(e\cdot\overline{k\Delta^+})^\times \cong GL_t(k')$. The centre of $(e\cdot\overline{k\Delta^+})^\times$ is therefore isomorphic to $Z(GL_t(k'))$, which we will identify with $k'^\times$ under the diagonal embedding. In particular, $A_H$ is an extension of $H$ by $k'^\times$. Indeed, it is easy to check that the following diagram commutes and has exact rows:

\centerline{
\xymatrix{
1 \ar[r] & k'^\times \ar[r]^i\ar@{=}[d] & A_H \ar@{.>}[r]^{\pi_H}\ar@{.>}[d] & H \ar[r]\ar[d] & 1\\
1 \ar[r] & k'^\times \ar[r] & (e\cdot\overline{k\Delta^+})^\times \ar[r] & \mathrm{Inn}(e\cdot\overline{k\Delta^+}) \ar[r]& 1,
}
}

where the inclusion map $i: k'^\times \to A_H$ is given by $i(x) = (x,1)$. (The image of $i$ is just $A_{\{1\}}$.)

We will now examine the subgroup structure of $A_G$.

\begin{lem}\label{lem: quotients of fibre products A}
If $N$ is a closed normal subgroup of $G$, then $A_N \lhd A_G$, and $A_G/A_N \cong G/N$.
\end{lem}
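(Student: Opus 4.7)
The plan is to realise $A_N$ simultaneously as a kernel, so that both the normality assertion $A_N \lhd A_G$ and the quotient identification $A_G/A_N \cong G/N$ fall out of a single application of the first isomorphism theorem. Concretely, I would construct a continuous surjective group homomorphism $\phi: A_G \twoheadrightarrow G/N$ with $\ker\phi = A_N$, where $\phi$ is the composite of the projection $\pi_G: A_G \to G$ with the quotient map $q: G\to G/N$.

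The one nontrivial input is that $\pi_G$ is itself surjective. Given $g\in G$, conjugation by $g$ preserves $e\cdot\overline{k\Delta^+}$ as a subring of $\overline{kG}$ because $G$ centralises $e$ and normalises $\Delta^+$, so it defines a ring automorphism of $e\cdot\overline{k\Delta^+} \cong M_t(k')$. Skolem--Noether (which was cited just before the definition of $A_H$) forces this automorphism to be inner, so there exists $x\in(e\cdot\overline{k\Delta^+})^\times$ whose image in $\mathrm{Inn}(e\cdot\overline{k\Delta^+})$ agrees with the image of $g$. By the definition of the fibre product $(x,g)\in A_G$, and $\pi_G(x,g)=g$. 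This is precisely the surjectivity of the right-hand vertical map in the commutative diagram with exact rows displayed just before the lemma.

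Given that, the rest is bookkeeping. By definition of the fibre product, $A_N$ is exactly $\{(x,h)\in A_G : h\in N\} = \pi_G^{-1}(N)$. Setting $\phi = q\circ\pi_G$, we get $\ker\phi = \pi_G^{-1}(\ker q) = \pi_G^{-1}(N) = A_N$, and $\phi$ is surjective because $\pi_G$ and $q$ are. The first isomorphism theorem then yields $A_N\lhd A_G$ and an induced isomorphism $A_G/A_N\xrightarrow{\sim} G/N$.

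I do not expect a real obstacle here; this is essentially a formal property of fibre products, and the only non-formal step is the appeal to Skolem--Noether, which the paper has already flagged and which has been in force throughout this section. If anything needs extra care it is checking that $\phi$ is continuous (so that the isomorphism $A_G/A_N \cong G/N$ is topological, not merely abstract), but this is automatic since $\pi_G$ is a projection from a closed subgroup of a product of topological groups and $q$ is continuous by definition of the quotient topology.
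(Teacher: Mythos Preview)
Your argument is correct, and in fact cleaner than the paper's. You realise $A_N$ as $\pi_G^{-1}(N)=\ker(q\circ\pi_G)$ and apply the first isomorphism theorem once, obtaining normality and the quotient identification simultaneously. The paper instead does two separate steps: it verifies $A_N\lhd A_G$ by a direct computation, conjugating a general element $(r,n)\in A_N$ by $(s,g)\in A_G$ and using the fibre-product condition $x^r=x^n$, $x^s=x^g$ to see that $(s^{-1}rs,g^{-1}ng)\in A_N$; it then builds a $3\times 3$ commutative diagram with exact columns and first two rows, and invokes the Nine Lemma to make the bottom row $1\to 1\to A_G/A_N\to G/N\to 1$ exact. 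Your route avoids both the explicit conjugation check and the Nine Lemma, at the cost of needing the surjectivity of $\pi_G$ up front; but as you note, that surjectivity is exactly the exactness of the row displayed just before the lemma, and your Skolem--Noether justification for it is the intended one. Either approach is fine; yours is shorter, while the paper's makes the parallel extension structure of $A_N$ and $A_G$ over $k'^\times$ more visibly explicit.
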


\begin{proof}
Firstly, clearly $A_N$ is naturally a subgroup of $A_G$, and the following diagram commutes and has exact rows:

\centerline{
\xymatrix{
1 \ar[r] & k'^\times \ar[r]\ar@{=}[d] & A_N \ar[r]^{\pi_N}\ar@{^(->}[d] & N \ar[r]\ar@{^(->}[d] & 1\\
1 \ar[r] & k'^\times \ar[r] & A_G \ar[r]_{\pi_G} & G \ar[r]& 1.
}
}

Let $(r,n) \in A_N$ and $(s,g) \in A_G$. For any $x\in e\cdot\overline{k\Delta^+}$, we have $x^r = x^n$ and $x^s = x^g$ (by definition of $A_N$ and $A_G$ respectively), which implies that $x^{s^{-1}rs} = x^{g^{-1}ng}$. As $g^{-1}ng \in N$, we have $$(s,g)^{-1}(r,n)(s,g) = (s^{-1}rs, g^{-1}ng) \in A_N,$$ and so $A_N \lhd A_G$.

Hence we may take cokernels of the vertical maps, completing the above diagram to the following commutative diagram, whose columns and first two rows are exact:

\centerline{
\xymatrix{
& 1 \ar[d] & 1 \ar[d] & 1 \ar[d] &\\
1 \ar[r] & k'^\times \ar[r]\ar@{=}[d] & A_N \ar[r]\ar[d] & N \ar[r]\ar[d] & 1\\
1 \ar[r] & k'^\times \ar[r]\ar[d] & A_G \ar[r]\ar[d] & G \ar[r]\ar[d]& 1\\
1 \ar[r] & 1 \ar[r]\ar[d] & A_G/A_N \ar[r]\ar[d] & G/N \ar[r]\ar[d] & 1\\
& 1 & 1 & 1 &
}
}

By the Nine Lemma \cite[Chapter XII, Lemma 3.4]{maclane}, the third row is now also exact.
\end{proof}

Consider the natural map $\Delta^+ \to (e\cdot \overline{k\Delta^+})^\times$ given by $g\mapsto e\cdot\overline{g}$. There is a ``diagonal" inclusion map $d: \Delta^+ \to A_G$ given by $g \mapsto (e\cdot\overline{g}, g)$, and the image $d(\Delta^+)$ is normal in $A_G$: indeed, suppose we are given $(x,h)\in A_G$. Then
\begin{align*}
d(g)^{(x,h)} & = (x,h)^{-1}(e\cdot\overline{g},g)(x,h) & \\
& = ((e\cdot\overline{g})^x, g^h) & \\
& = ((e\cdot\overline{g})^h, g^h) & \text{by definition of } A_G \\
& = d(g^h). &
\end{align*}

\begin{rk}\label{rk: A_Delta+}
The map $d$ has image contained in $A_{\Delta^+}$. Considered as a map $d: \Delta^+\to A_{\Delta^+}$, $d$ splits the map $\pi_{\Delta^+}: A_{\Delta^+}\to \Delta^+$.

Hence there are copies of $k'^\times$ and $\Delta^+$ in $A_{\Delta^+}$, and they commute: given $x\in k'^\times$, $g\in\Delta^+$, we have $$i(x) d(g) = (x,1)(e\cdot\overline{g},g) = (e\cdot\overline{g},g)(x,1) = d(g) i(x)$$ as $x$ commutes with $e\cdot\overline{g}$ inside $e\cdot\overline{k\Delta^+}$. We also have that $i(k'^\times)\cap d(\Delta^+) = \{(e, 1)\}$ is the trivial subgroup of $A_{\Delta^+}$.

In other words, $A_{\Delta^+} = i(k'^\times) d(\Delta^+) \cong k'^\times \times \Delta^+.$
\end{rk}

\begin{lem}\label{lem: defn of sigma}
Suppose that we have an injective group homomorphism $\sigma: G\to A_G$ splitting $\pi_G$ such that, for all $g\in \Delta^+$, we have $\sigma(g) = (e\cdot\overline{g}, g)$. Then we can find a group homomorphism $\delta:G\to (e\cdot\overline{k\Delta^+})^\times$ satisfying the conditions of Corollary \ref{cor: untwisting map delta}.
\end{lem}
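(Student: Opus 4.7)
The plan is to define $\delta$ directly as the composite of $\sigma$ with projection onto the first factor of $A_G$. Concretely, since $\sigma$ splits $\pi_G$, the image $\sigma(g) \in A_G \subseteq (e\cdot\overline{k\Delta^+})^\times \times G$ must have second coordinate equal to $g$; writing $\sigma(g) = (\delta(g), g)$ uniquely determines a function $\delta: G \to (e\cdot\overline{k\Delta^+})^\times$, and the fact that $\sigma$ is a continuous group homomorphism transfers to $\delta$ because the first-factor projection is one.

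I would then verify condition (i) of Corollary \ref{cor: untwisting map delta} immediately from the hypothesis: for $g \in \Delta^+$ we are told $\sigma(g) = (e\cdot\overline{g}, g)$, so $\delta(g) = e\cdot\overline{g}$.

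For condition (ii), I would unpack the defining property of the fibre product $A_G$. Membership $(\delta(g), g) \in A_G$ means exactly that $\delta(g)$ and $g$ map to the same element of $\mathrm{Inn}(e\cdot\overline{k\Delta^+})$, i.e.\ that conjugation by $\delta(g)$ and conjugation by $\overline{g}$ induce the same automorphism of $e\cdot\overline{k\Delta^+}$. Rearranging $\delta(g)^{-1} x \delta(g) = \overline{g}^{-1} x \overline{g}$ (valid for every $x \in e\cdot\overline{k\Delta^+}$) gives $(\delta(g)^{-1} \overline{g})\, x = x\, (\delta(g)^{-1} \overline{g})$, which is precisely the statement that $\delta(g)^{-1}\overline{g}$ centralises $e\cdot\overline{k\Delta^+}$. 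Here I would want to be careful that $\overline{g}$ really does conjugate $e\cdot\overline{k\Delta^+}$ into itself, which follows from the normality of $\Delta^+$ in $G$ together with the standing assumption that $G$ centralises $e$.

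There is no substantive obstacle: the lemma is essentially the observation that the fibre product $A_G$ has been engineered precisely to encode condition (ii), and that the normalisation $\sigma|_{\Delta^+}(g) = (e\cdot\overline{g}, g)$ is exactly what encodes condition (i). The injectivity of $\sigma$ is not strictly needed for this corollary — it is in any case automatic from $\pi_G \circ \sigma = \id_G$ — but will presumably be used when constructing such a $\sigma$ in the first place, which is where the real work of the section will lie.
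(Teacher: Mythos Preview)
Your proof is correct and matches the paper's approach exactly: the paper's entire proof is the single sentence ``Define $\delta$ to be the composite of $\sigma: G\to A_G$ with the projection $A_G \to (e\cdot\overline{k\Delta^+})^\times$,'' and you have supplied the verification of conditions (i) and (ii) that the paper leaves implicit. Your closing observation that the injectivity hypothesis is redundant (being automatic from $\pi_G\circ\sigma = \id_G$) is also correct.
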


\begin{proof}
Define $\delta$ to be the composite of $\sigma: G\to A_G$ with the projection $A_G \to (e\cdot\overline{k\Delta^+})^\times$.
\end{proof}

We suppose now that $G/\Delta^+$ is pro-$p$, and find a map $\sigma$ satisfying Lemma \ref{lem: defn of sigma} in this case.

Write $P = G/\Delta^+$, a pro-$p$ group, and note that this is isomorphic to $A_G/A_{\Delta^+}$ by Lemma \ref{lem: quotients of fibre products A}. Note also that $A_{\Delta^+}/d(\Delta^+) \cong k'^\times$ by Remark \ref{rk: A_Delta+}, and as we have assumed that $k$ is finite, we know that this is a $p'$-group. Hence $A_G/d(\Delta^+)$ is, up to isomorphism, an extension of the pro-$p$ group $P$ by the finite $p'$-group $k'^\times$; and so we may apply Sylow's theorems \cite[\S 1, exercise 11]{DDMS} to find a Sylow pro-$p$ subgroup $L/d(\Delta^+)$ of $A_G/d(\Delta^+)$ which is isomorphic to $P$.

This information is summarised in the following diagram.

\centerline{
\xymatrix{
&A_G\ar@{-}[dd]^P\ar@{-}[dr]&\\
&&L\ar@{-}[dd]^P\\
&A_{\Delta^+}\ar@{-}[dr]^{k'^\times}\ar@{-}[dl]_{\Delta^+}&\\
i(k'^\times)\ar@{-}[dr]_{k'^\times}&&d(\Delta^+)\ar@{-}^{\Delta^+}[dl]\\
&1&
}
}

\begin{lem}\label{lem: sigma exists}
Suppose $G/\Delta^+$ is pro-$p$. Then there is a group homomorphism $\sigma$ splitting the surjection $\pi_G: A_G \to G$ satisfying the hypotheses of Lemma \ref{lem: defn of sigma}.
\end{lem}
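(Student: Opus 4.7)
The plan is to show that the Sylow pro-$p$ subgroup $L$ of $A_G$ already produced in the discussion before the lemma is a complement to $\ker(\pi_G) = i(k'^\times)$ in $A_G$, so that $\pi_G|_L : L \to G$ is an isomorphism; then $\sigma := (\pi_G|_L)^{-1}$ will automatically split $\pi_G$, and the normalisation condition $\sigma(g) = (e\cdot\overline{g},g)$ on $\Delta^+$ will fall out by uniqueness.

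First I would unpack what was produced by the Sylow argument. Since $L/d(\Delta^+)$ is a Sylow pro-$p$ subgroup of $A_G/d(\Delta^+)$, and $A_{\Delta^+}/d(\Delta^+)\cong k'^\times$ is a finite $p'$-group while $A_G/A_{\Delta^+}\cong G/\Delta^+$ is pro-$p$, the standard Schur--Zassenhaus / Sylow correspondence implies that $L/d(\Delta^+)$ maps isomorphically onto $A_G/A_{\Delta^+}$. Translated back to $A_G$, this says
\[
L\cdot A_{\Delta^+} = A_G \quad\text{and}\quad L\cap A_{\Delta^+} = d(\Delta^+).
\]

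Next I would apply $\pi_G$. From $L\cdot A_{\Delta^+} = A_G$ we get $\pi_G(L)\cdot \Delta^+ = G$, and since $L$ already contains $d(\Delta^+)$ we have $\Delta^+ \subseteq \pi_G(L)$, so $\pi_G(L) = G$. For injectivity, $\ker(\pi_G|_L) = L\cap i(k'^\times)$; but $i(k'^\times)\subseteq A_{\Delta^+}$, so
\[
L\cap i(k'^\times) = L\cap A_{\Delta^+}\cap i(k'^\times) = d(\Delta^+)\cap i(k'^\times) = 1,
\]
the last equality holding because an element $(e\cdot\overline{g},g)=(x,1)$ forces $g=1$. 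Hence $\pi_G|_L$ is an isomorphism and we may set $\sigma := (\pi_G|_L)^{-1} : G\to L\hookrightarrow A_G$, which is a continuous group homomorphism splitting $\pi_G$. Finally, for $g\in\Delta^+$ the element $d(g) = (e\cdot\overline{g},g)$ lies in $d(\Delta^+)\subseteq L$ and satisfies $\pi_G(d(g))=g$; by injectivity of $\pi_G|_L$ we must have $\sigma(g)=d(g)$, which is the condition required by Lemma \ref{lem: defn of sigma}.

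The only conceptual step is the Sylow-theoretic one, which has already been handled in the paragraph preceding the lemma; the rest is diagram-chasing in the fibre product. The subtlety to watch out for is purely topological: $\sigma$ must be \emph{continuous}, but this is automatic because $\pi_G|_L$ is a continuous bijection between compact Hausdorff (profinite) groups and hence a topological isomorphism. Thus no obstacle beyond bookkeeping remains.
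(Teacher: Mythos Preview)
Your proof is correct and follows essentially the same route as the paper: show $\pi_G|_L$ is bijective by computing $L\cap i(k'^\times)=d(\Delta^+)\cap i(k'^\times)=1$ and $L\cdot i(k'^\times)=L\cdot A_{\Delta^+}=A_G$, then invert and check $\sigma|_{\Delta^+}=d$ by uniqueness. Your version is slightly more explicit in isolating the two Sylow consequences $L\cdot A_{\Delta^+}=A_G$ and $L\cap A_{\Delta^+}=d(\Delta^+)$ up front, and you add the continuity remark (automatic by compactness), which the paper leaves implicit; but there is no genuine difference in strategy.
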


\begin{proof}
Consider $\pi_G|_L: L\to G$. Now, $\ker(\pi_G) = i(k'^\times)$, so
\begin{equation*}
\begin{split}
\ker(\pi_G|_L) & = i(k'^\times) \cap L \\
& =(i(k'^\times) \cap A_{\Delta^+}) \cap L \\
& = i(k'^\times) \cap (A_{\Delta^+} \cap L) \\
& = i(k'^\times) \cap d(\Delta^+) = 1,
\end{split}
\end{equation*}
so $\pi_G|_L$ is injective. Also,
\begin{equation*}
i(k'^\times) \cdot L = A_{\Delta^+} \cdot L = A_G,
\end{equation*}
so
\begin{align*}
G = \pi_G(A_G) & = \pi_G(i(k'^\times) \cdot L) & \\
& = \pi_G(i(k'^\times)) \cdot \pi_G(L) & \\
& = \pi_G(L) &
\end{align*}
as $\pi_G(i(x)) = \pi_G((x,1)) = 1$ for $x\in k'^\times$, and hence $\pi_G|_L$ is surjective. So $\pi_G|_L$ is in fact an isomorphism $L\to G$.

Define $\sigma: G\to L\to A_G$ (i.e. $(\pi_G|_L)^{-1}$ followed by inclusion). By construction, this $\sigma$ is a homomorphism, and splits $\pi_G$. Also, as $\pi_G(\sigma(g)) = \pi_G(d(g)) = g$ for all $g\in \Delta^+$, we have that $\sigma(g)d(g)^{-1} \in \ker \pi_G \cap L = 1$, and so $\sigma(g) = d(g) = (e\cdot\overline{g},g)$ for $g\in \Delta^+$ as required.
\end{proof}

Now we may define $\delta: G \to (e\cdot\overline{k\Delta^+})^\times$ as in the proof of Lemma \ref{lem: defn of sigma}, allowing us to deduce the following theorem, in which we continue to write $q: kG \to e\cdot\overline{kG}$ for the natural quotient map:

\begin{thm}\label{thm: we can find a delta when G/Delta^+ is pro-p}
Adopt Setup \ref{setup: k, G, bar}, and assume that $G/\Delta^+$ is pro-$p$. Let $\mathfrak{M}$ be a minimal prime of $kG$, and $e \in \cpi(\mathfrak{M})$, and assume further that $e$ is centralised by $G$.
\begin{itemize}
\item[(i)] There exists an isomorphism
\begin{equation*}
\Psi: e\cdot\overline{kG} \to e\cdot\overline{k\Delta^+}\tensor{k} k[[G/\Delta^+]].
\end{equation*}
\item[(ii)] There exist a finite field extension $k'/k$ and a positive integer $t$, and an isomorphism
\begin{equation*}
\psi: e\cdot\overline{kG} \to M_t(k'[[G/\Delta^+]]).
\end{equation*}
\end{itemize}
Furthermore, let $A$ be an ideal of $kG$ with $\mathfrak{M}\subseteq A$, so that $\psi\circ q(A) = M_t(\mathfrak{a})$ for some ideal $\mathfrak{a}$ of $k'[[G/\Delta^+]]$. Then:
\begin{itemize}
\item[(iii)] $A$ is prime if and only if $\mathfrak{a}$ is prime. Also, $A$ is almost faithful (as an ideal of $kG$) if and only if $\mathfrak{a}$ is faithful (as an ideal of $k[[G/\Delta^+]]$).
\end{itemize}
\end{thm}

\begin{proof}
$ $

\begin{itemize}
\item[(i)] The map $\delta:G\to (e\cdot\overline{k\Delta^+})^\times$ given by Lemmas \ref{lem: defn of sigma} and \ref{lem: sigma exists} satisfies the conditions of Corollary \ref{cor: untwisting map delta}, which gives the isomorphism $\Psi: e\cdot\overline{kG} \to e\cdot\overline{k\Delta^+}\tensor{k} k[[G/\Delta^+]]$.
\item[(ii)] As in Corollary \ref{cor: untwisting map delta}, we may identify $e\cdot\overline{k\Delta^+}\tensor{k} k[[G/\Delta^+]]$ with $M_t(k'[[G/\Delta^+]])$ by appealing to Lemma \ref{lem: definition of e}(iii). Now this result follows from (i).
\item[(iii)] This is just Proposition \ref{propn: untwisted faithful primes remain faithful primes}.\qedhere
\end{itemize}
\end{proof}

\textit{Proof of Theorem A.} Writing $\overline{(-)}$ for the natural quotient map $kG\to kG/J(k\Delta^+)kG$ puts us in Setup \ref{setup: k, G, bar}. As $\mathfrak{M}\cap k\Delta^+$ is prime, we see from Lemma \ref{lem: interpretation of G centralising e} that $G$ must centralise $e$. Now the result follows from Theorem \ref{thm: we can find a delta when G/Delta^+ is pro-p}(ii).\qed

\subsection{Central twists by 2-cocycles}

First, some basic definitions.

\begin{defn}
Let $A$ be a (not necessarily commutative) ring, and $G$ an arbitrary group. Suppose we are given two functions \emph{of sets}, $\sigma: G\to \Aut(A)$ and $\alpha: G\times G\to A^\times$. (We will write the image of $a\in A$ under the automorphism $\sigma(g)$ as $a^{\sigma(g)}$.) If, for all $x, y, z\in G$, we have
\begin{equation}\label{eqn: 2-cocycle}
\alpha(xy,z)\alpha(x,y)^{\sigma(z)} = \alpha(x,yz)\alpha(y,z),
\end{equation}
then we will say that $\alpha$ is a \emph{2-cocycle (with respect to $\sigma$)}. We will write the set of such functions $\alpha$ as $Z_\sigma^2(G, A)$.
\end{defn}

Let $G$ be a finite group, $R$ a ring, and $S = R*G$ a fixed crossed product. Recall \cite[\S 1]{passmanICP} that this means that:
\begin{itemize}
\item $S$ is a free $R$-module on a generating set $\overline{G} \subseteq S$, where $\overline{G}$ comes equipped with a fixed bijection $G\to\overline{G}$, which we call a \emph{basis} for $S$ (and may simply write as $\overline{G}$ if the bijection is not important); and
\item writing our chosen basis as $g\mapsto \overline{g}$ for all $g\in G$, multiplication in $S$ is given by
\begin{align}
r\overline{g} &= \overline{g} r^{\sigma(g)}&& \text{for all } r\in R, g\in G, \label{eqn: passman conj}\\
\overline{g}\overline{h} &= \overline{gh}\tau(g,h)&& \text{for all } g,h\in G,\label{eqn: passman mult}
\end{align}
where
\begin{align}
\sigma: G\to \Aut(R),&&\text{the \emph{action}}, \nonumber\\
\tau: G\times G\to R^\times,&&\text{the \emph{twisting}},\nonumber
\end{align}
are two functions \emph{of sets} satisfying
\begin{align}
\sigma(x)\sigma(y) &= \sigma(xy)\eta(x,y)\label{eqn: passman sigma}\\
\tau(xy,z)\tau(x,y)^{\sigma(z)} &= \tau(x,yz)\tau(y,z),\label{eqn: passman tau}
\end{align}
where $\eta(x,y)$ is the automorphism of $R$ given by conjugation by $\tau(x,y)$.
\end{itemize}

\begin{rks}\label{rk: cp has ring structure}
$ $

\begin{enumerate}
\item Equation (\ref{eqn: passman tau}) says that $\tau$ is a 2-cocycle for $\sigma$ with values in $R^\times$.
\item Equations (\ref{eqn: passman sigma}) and (\ref{eqn: passman tau}) are precisely the conditions on $\sigma$ and $\tau$ for which the multiplication defined in equations (\ref{eqn: passman conj}) and (\ref{eqn: passman mult}) gives $S$ a ring structure. See \cite[\S 1]{passmanICP} for more details.
\end{enumerate}
\end{rks}

\begin{notn}\label{notn: crossed products}
We will often need to write this structure explicitly as $$S = R\cp{\sigma}{\tau} G.$$
\end{notn}
Now, we state the setup of the rest this section. Suppose we are given a ring $R$, a finite group $G$, and a fixed crossed product
$S = R\cp{\sigma}{\tau} G$
as in Notation \ref{notn: crossed products}; and suppose further that we wish to define some new crossed product of $R$ by $G$, with the same action map $\sigma$ but a different twisting map $\tau'$, i.e. $S' = ``R\cp{\sigma}{\tau'} G"$. (We will write the bases of these two crossed products differently for clarity.)

Explicitly: write $S$ as the free left $R$-module $\bigoplus_{g\in G} R\overline{g}$, with basis $g\mapsto \overline{g}$, together with multiplication data $\sigma$ and $\tau$ satisfying equations (\ref{eqn: passman conj}--\ref{eqn: passman tau}). Let $S'$ be another free left $R$-module, $\bigoplus_{g\in G} R\hat{g}$, with basis $g\mapsto \hat{g}$, together with the data of a map $\sigma$ satisfying (\ref{eqn: passman conj}) and a map $\tau'$ satisfying (\ref{eqn: passman mult}) with respect to $\sigma$.

Of course, \emph{a priori}, $S'$ may not satisfy (\ref{eqn: passman sigma}--\ref{eqn: passman tau}), so the above may not define a ring structure on $S'$. So we ask: under which conditions on $\tau'$ is $S'$ a ring?

\textbf{For the rest of this section, until stated otherwise, we fix a ring $R$, a finite group $G$ and a crossed product $S = R\cp{\sigma}{\tau} G$, and we write $A = Z(R)^\times$.}

\begin{lem}
$S'$ is well-defined as a ring if and only if there exists $\alpha\in Z^2_{\sigma}(G,A)$ satisfying $\tau'(x,y) = \tau(x,y) \alpha(x,y)$ for all $x, y\in G$.
\end{lem}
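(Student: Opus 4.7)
The strategy is to translate the statement ``$S'$ is a ring'' into the pair of conditions (\ref{eqn: passman sigma}) and (\ref{eqn: passman tau}) applied to $(\sigma, \tau')$, and then compare each with the corresponding condition already satisfied by $(\sigma, \tau)$, using the ansatz $\alpha(x,y) := \tau(x,y)^{-1}\tau'(x,y) \in R^\times$. The two directions will then emerge simultaneously from this comparison.

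First I would handle the condition (\ref{eqn: passman sigma}) for $\tau'$: it says $\sigma(x)\sigma(y) = \sigma(xy)\eta'(x,y)$ where $\eta'(x,y)$ is conjugation by $\tau'(x,y)$. Subtracting the same equation for $\tau$ forces $\eta'(x,y) = \eta(x,y)$ in $\Aut(R)$, i.e.\ conjugation by $\alpha(x,y)$ must be trivial on $R$. This is exactly the statement that $\alpha(x,y)$ is a central unit, so $\alpha(x,y) \in A$. (Here I would briefly note that each $\sigma(z) \in \Aut(R)$ carries central units to central units, so $\sigma$ restricts to an action of $G$ on $A$, and the group $Z^2_\sigma(G, A)$ is well-defined.)

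Next I would substitute $\tau'(x,y) = \tau(x,y)\alpha(x,y)$ into the 2-cocycle relation (\ref{eqn: passman tau}) for $\tau'$. Since $\sigma(z)$ is a ring automorphism, the left side expands to $\tau(xy,z)\alpha(xy,z)\tau(x,y)^{\sigma(z)}\alpha(x,y)^{\sigma(z)}$ and the right side to $\tau(x,yz)\alpha(x,yz)\tau(y,z)\alpha(y,z)$. Using centrality of the $\alpha$-values (as elements of $A \subseteq Z(R^\times)$) to commute them past the neighbouring $\tau$-values, both sides factor into a ``$\tau$-part'' times an ``$\alpha$-part''. The $\tau$-parts are equal by (\ref{eqn: passman tau}) for $\tau$, so the relation reduces exactly to
\[
\alpha(xy,z)\alpha(x,y)^{\sigma(z)} = \alpha(x,yz)\alpha(y,z),
\]
which is (\ref{eqn: 2-cocycle}), i.e.\ $\alpha \in Z^2_\sigma(G, A)$.

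Every step in this analysis is reversible: given any $\alpha \in Z^2_\sigma(G, A)$, centrality of $\alpha(x,y)$ yields (\ref{eqn: passman sigma}) for $\tau'$, and the cocycle identity for $\alpha$ together with that for $\tau$ yields (\ref{eqn: passman tau}) for $\tau'$, so $(\sigma, \tau')$ defines a crossed product and $S'$ is a ring. The only real bookkeeping obstacle is keeping track of which automorphisms are being applied to which elements when rearranging centrality; once one observes that $\sigma(z)A \subseteq A$ and that central units commute with everything in $R^\times$ (and in particular with the various $\tau$-values that appear), the calculation is mechanical.
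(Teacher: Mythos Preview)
Your proposal is correct and follows essentially the same approach as the paper: define $\alpha = \tau^{-1}\tau'$, use (\ref{eqn: passman sigma}) to force $\eta = \eta'$ and hence $\alpha$ to be central-valued, then use (\ref{eqn: passman tau}) to reduce the ring condition on $\tau'$ to the 2-cocycle condition on $\alpha$, with the converse being the same argument in reverse. The only difference is that you spell out the substitution and the use of centrality in the cocycle comparison more explicitly than the paper does.
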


\begin{proof}
By Remark \ref{rk: cp has ring structure}.2, we need to check that $\alpha\in Z^2_{\sigma}(G,A)$ if and only if $S'$ satisfies (\ref{eqn: passman sigma}--\ref{eqn: passman tau}).

In order for $S$ and $S'$ to satisfy (\ref{eqn: passman sigma}), we must have
\begin{align*}
\sigma(x)\sigma(y) &= \sigma(xy)\eta(x,y)&\text{ and}\\
\sigma(x)\sigma(y) &= \sigma(xy)\eta'(x,y)& \text{ for all }x, y\in G,
\end{align*}
where $\eta(x,y)$ and $\eta'(x,y)$ are the automorphisms induced by conjugation by $\tau(x,y)$ and $\tau'(x,y)$ respectively. This is equivalent to the statement that $\eta = \eta'$. In other words, writing $\alpha = \tau^{-1}\tau'$ pointwise, we see that conjugation by $\alpha(x,y)$ must induce the trivial automorphism on $R$, and so $$\alpha:G\times G\to Z(R)^\times = A.$$
In order for $\tau'$ to satisfy (\ref{eqn: passman tau}), we must have that $\alpha$ satisfies (\ref{eqn: passman tau}), i.e. $\alpha\in Z^2_{\sigma}(G,A)$. The converse is obvious.
\end{proof}

\begin{defn}\label{defn: cocycle twist}
When the crossed product $S = R*G = R\cp{\sigma}{\tau} G$ and the central 2-cocycle $\alpha$ are fixed, write the ring $S'$ defined above as $S_\alpha$: we will say that $S_\alpha$ is the \emph{central 2-cocycle twist of $S$ by $\alpha$ with respect to the decomposition $S = R\cp{\sigma}{\tau} G$}, meaning that
$$S_\alpha = R\cp{\sigma}{\tau\alpha} G.$$
Sometimes it will not be necessary to specify all of this information in full; we may simply refer to $S_\alpha$ as \emph{a central 2-cocycle twist of $S$}, or similar.
\end{defn}

\begin{rks}
$ $

\begin{enumerate}
\item Note that $S_\alpha$ depends not only on the map $\tau$, but also on the choice of basis $G\to \overline{G}$ for $S = R*G$.
\item Fix a crossed product $S = R*G$, choose some $\alpha\in Z^2_\sigma(G,A)$, and form the 2-cocycle twist $S_\alpha$. Write the resulting crossed product decompositions as
$$S = \bigoplus_{g\in G} R\overline{g},\;\; S_\alpha = \bigoplus_{g\in G} R\hat{g},$$
with bases $g\mapsto \overline{g}$ and $g\mapsto\hat{g}$ respectively.

We say that $S$ and $S_\alpha$ differ by a \emph{diagonal change of basis} if there is a set of units $\{u_g\in R^\times: g\in G\}$ such that $\hat{g} \mapsto \overline{g} u_g$ gives an $R$-linear ring isomorphism $S_\alpha\to S$. (In particular, if $S$ and $S_\alpha$ differ by a diagonal change of basis, they are isomorphic as rings.) By \cite[exercise 1.1]{passmanICP}, $S$ and $S_\alpha$ differ by a diagonal change of basis if and only if $\alpha$ is a \emph{2-coboundary} for $\sigma$, i.e. there is some function $\varphi: G\to R^\times$ with $$\alpha(x,y) = \varphi(xy)^{-1} \varphi(x)^{\sigma(y)} \varphi(y)$$ for all $x,y\in G$. Hence $S$ and $S_\alpha$ are non-isomorphic only if $\alpha$ has non-trivial cohomology class. But we will not develop this idea any further in this paper.
\item We note that similar twists have been studied by Aljadeff \emph{et al.}, e.g. in \cite{aljadeff-rob}.
\end{enumerate}
\end{rks}

Central 2-cocycle twists will occur naturally in the theory later. First, we record below an easy consequence of the definitions above, namely that the twisting operation $(-)_\alpha$ preserves primality in certain special cases. We are certain that this will be useful in future work on prime extension rings.

\begin{defn}\label{defn: X-inn autom}{\cite[Lemma 12.3]{passmanICP}}
Let $R$ be a prime ring. An automorphism $\varphi: R\to R$ is \emph{X-inner} if there exist nonzero elements $a,b,c,d\in R$ such that, for all $x\in R$,
$$axb = cx^\varphi d.$$
(Here $x^\varphi$ denotes the image of $x$ under $\varphi$.) Write $\mathrm{Xinn}(R)$ to denote the subgroup of $\Aut(R)$ of X-inner automorphisms.

Now let $G$ be a group, and fix a crossed product $S = R\cp{\sigma}{\tau} G$. Write $\Xinn{S}{R}{G}$ for the normal subgroup of $G$ consisting of elements $g\in G$ that act by X-inner automorphisms on $R$, i.e.
$$\Xinn{S}{R}{G} = \sigma^{-1}(\sigma(G)\cap \mathrm{Xinn}(R)).$$
\end{defn}

\begin{thm}
Fix a crossed product $S = R*G$ with $R$ a prime ring and $G$ a finite group. Then $\Xinn{S}{R}{G}$ and $\Xinn{S_\alpha}{R}{G}$ are equal for every $\alpha\in Z^2_\sigma(G,A)$. In particular, if $\Xinn{S}{R}{G} = 1$, then $S_\alpha$ is also a prime ring for every $\alpha\in Z^2_\sigma(G,A)$.
\end{thm}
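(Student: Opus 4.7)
The plan splits into two clean steps: a definitional observation for the equality of the two $\mathrm{Xinn}$ subgroups, followed by invocation of a standard primeness criterion for crossed products.

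First, I would establish $\Xinn{S}{R}{G} = \Xinn{S_\alpha}{R}{G}$ directly from Definition \ref{defn: cocycle twist}. By construction, the central 2-cocycle twist $S_\alpha = R \cp{\sigma}{\tau\alpha} G$ uses the same action $\sigma: G \to \Aut(R)$ as $S = R \cp{\sigma}{\tau} G$; only the twisting cocycle is altered, by pointwise multiplication against $\alpha$, which takes values in $Z(R^\times)$ and therefore induces the trivial inner automorphism. Since the subgroup $\Xinn{S}{R}{G} = \sigma^{-1}(\sigma(G) \cap \mathrm{Xinn}(R))$ is determined solely by the action map $\sigma$ and the intrinsic subgroup $\mathrm{Xinn}(R) \leq \Aut(R)$, and neither depends on the twisting, the equality follows at once.

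For the ``in particular'' clause, the hypothesis $\Xinn{S}{R}{G} = 1$ says that no non-identity element of $G$ acts on $R$ by an X-inner automorphism, and by the first part this persists after twisting. I would then invoke the standard primeness criterion from Passman's theory of crossed products (see \cite[Corollary 12.6]{passmanICP}): if $R$ is prime and no $1 \neq g \in G$ acts as an X-inner automorphism of $R$, then any crossed product $R*G$ is prime. Applying this verbatim to $S_\alpha$ (which has the same action $\sigma$ on the prime ring $R$) yields that $S_\alpha$ is prime.

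The content of the proof is essentially definitional plus a citation, so there is no substantial computational obstacle. The only point worth emphasising carefully is that X-inner-ness is an intrinsic property of $R$ — it is defined purely in terms of elements and automorphisms of $R$ itself, not of any overring — which is precisely what makes the $\mathrm{Xinn}$ subgroup of $G$ invariant under central $2$-cocycle twisting, and thereby reduces the theorem to a transparent two-line argument.
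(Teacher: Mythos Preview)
Your proposal is correct and matches the paper's own proof essentially line for line: the paper likewise observes that $\Xinn{S_\alpha}{R}{G}$ depends only on the action map $\sigma$, giving the equality immediately, and then cites \cite[Corollary 12.6]{passmanICP} to deduce primeness of $S_\alpha$ when this subgroup is trivial.
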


\begin{proof}
It is clear from the definition that $\Xinn{S_\alpha}{R}{G}$ depends only on the map $\sigma$, and so $\Xinn{S_\alpha}{R}{G} = \Xinn{S}{R}{G}$ for all $\alpha$. A special case of \cite[Corollary 12.6]{passmanICP} implies that, if $\Xinn{S_\alpha}{R}{G} = 1$, then $S_\alpha$ is a prime ring.
\end{proof}

The first half of this theorem is little more than a trivial manipulation of terminology, but we state it in full as we will make crucial use of this fact in a future paper.

\subsection{A crossed product structure for general $G$}

Now we turn our attention back to the problem of understanding quotients of completed group algebras.

Adopt Setup \ref{setup: k, G, bar}. Let $\mathfrak{M}$ be a minimal prime of $kG$, $e\in\cpi(\mathfrak{M})$, and suppose that $e$ is centralised by $G$.

We are now working in a more general case than \S 4.1, as $G/\Delta^+$ is not necessarily pro-$p$. In this case we may not be able to find a group homomorphism $\delta: G \to (e\cdot\overline{k\Delta^+})^\times$ satisfying the hypotheses of Corollary \ref{cor: untwisting map delta}, and so the conclusions of Theorem \ref{thm: we can find a delta when G/Delta^+ is pro-p} may not hold. In particular, $e\cdot\overline{kG}$ may not be isomorphic to $M_t(k'[[G/\Delta^+]])$, as might be expected.

In this case, fix an open normal pro-$p$ subgroup $N$ of $G/\Delta^+$ (e.g. by taking the normal core in $G$ of an open uniform subgroup, as in \cite[Theorem 8.32]{DDMS}), and write $H$ for the preimage of $N$ in $G$, so that by Theorem \ref{thm: we can find a delta when G/Delta^+ is pro-p} we \emph{do} get isomorphisms
\begin{eqnarray*}
&\Psi: e\cdot\overline{kH} \to e\cdot\overline{k\Delta^+}\tensor{k'} k'N,\\
&\psi: e\cdot\overline{kH} \to M_t(k'N).
\end{eqnarray*}
(Here, in the codomain of $\Psi$, we have identified the two rings $e\cdot\overline{k\Delta^+}\tensor{k} kN$ and $e\cdot\overline{k\Delta^+}\tensor{k'} k'N$ as in Lemma \ref{lem: definition of e}(iii).)

Now, to understand the structure of the ring $e\cdot\overline{kG}$, we will have to rely on a suitable crossed product structure of $kG$. That is, writing $F = G/H$, we can find a crossed product decomposition
\begin{equation*}
kG = kH * F.
\end{equation*}
In the following discussion, we will construct a related crossed product $k'N * F$ (\emph{not} necessarily isomorphic to $k'[[G/\Delta^+]]$!), and show that the isomorphism $\psi$ extends to an isomorphism
\begin{equation*}
\tilde{\psi}: e\cdot\overline{kG} \to M_t(k'N*F).
\end{equation*}
In fact, the crossed product $k'N*F$ that we construct will naturally be a central 2-cocycle twist of $k'[[G/\Delta^+]]$.

Recall the map $\delta: H\to (e\cdot\overline{k\Delta^+})^\times$ constructed by applying Lemmas \ref{lem: defn of sigma} and \ref{lem: sigma exists} to $H$. Continue to write
\begin{align*}
\varepsilon: H&\to (e\cdot\overline{kH})^\times\\
h&\mapsto \delta(h)^{-1}\overline{h}
\end{align*}
for all $h\in H$, as in the proof of Theorem \ref{thm: untwisting map delta}. Recall that this $\varepsilon$ is a continuous group homomorphism.

\textbf{For the remainder of this subsection, we fix an element $g\in G$.}

Fix $M_g \in (e\cdot\overline{k\Delta^+})^\times$, an arbitrary lift of the image of $g$ under the map $G\to \mathrm{Inn}(e\cdot\overline{k\Delta^+})$. In other words, $M_g$ is any element of $(e\cdot\overline{k\Delta^+})^\times$ such that $x^g = x^{M_g}$ for all $x\in e\cdot\overline{k\Delta^+}$, or equivalently such that $(M_g,g)\in A_G$. Define
\begin{equation}\label{eqn: g-tilde}
\widetilde{g} = M_g^{-1} \overline{g} \in (e\cdot\overline{kG})^\times
\end{equation}
-- by construction, $\widetilde{g}$ centralises $e\cdot\overline{k\Delta^+}$, and this element will play the role of ``$\varepsilon(g)$" when $g\not\in H$.

Recall also the isomorphisms

\centerline{
\xymatrix{
e\cdot\overline{kH} \ar@<1ex>[r]^-{\Psi} \ar@{<-}@<-1ex>[r]_-{\Phi}& e\cdot\overline{k\Delta^+} \tensor{k} kN
}
}

of the proof of Theorem \ref{thm: untwisting map delta} and Corollary \ref{cor: untwisting map delta}.

Conjugation on the right by $\widetilde{g}$ is a ring automorphism $\varphi_g \in \mathrm{Aut}(e\cdot\overline{kH})$, which induces a ring automorphism
$$\Psi\circ \varphi_g\circ \Phi =: \theta_g \in \mathrm{Aut}(e\cdot\overline{k\Delta^+}\tensor{k} kN).$$

That is:

\centerline{
\xymatrix{
e\cdot\overline{kH} \ar[r]^-{\varphi_g}& e\cdot\overline{kH}\ar[d]^\Psi\\
e\cdot\overline{k\Delta^+}\tensor{k} kN\ar[u]^\Phi \ar[r]_-{\theta_g} & e\cdot\overline{k\Delta^+}\tensor{k} kN
}
}

For ease of notation, we will write these maps on the left as before, though note that e.g. $\theta_{g_1}\circ \theta_{g_2}(x) = \theta_{g_2 g_1}(x)$ for all $g_1, g_2\in G$.

Given $r\in e\cdot\overline{k\Delta^+}$ and $h\Delta^+ \in N$, we wish to calculate $\theta_g(r\otimes h\Delta^+)$ explicitly.

We begin with a trivial remark:

\begin{lem}\label{lem: f-tilde centralises x}
By construction, $\varphi_g(r) = r$, and so $\theta_g(r\otimes 1) = r\otimes 1$. \qed
\end{lem}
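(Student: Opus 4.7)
The statement has two halves, and each is a direct unwinding of the relevant definitions. The plan is as follows.

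First I would verify $\varphi_g(r) = r$. By definition, $\varphi_g$ is conjugation by $\widetilde{g} = M_g^{-1}\overline{g}$, so for $r \in e\cdot\overline{k\Delta^+}$,
$$\varphi_g(r) \;=\; \widetilde{g}^{-1} r \widetilde{g} \;=\; \overline{g}^{-1} M_g\, r\, M_g^{-1} \overline{g}.$$
The element $M_g$ was chosen precisely so that $x^g = x^{M_g}$ for every $x\in e\cdot\overline{k\Delta^+}$, i.e.\ $\overline{g}^{-1} x \overline{g} = M_g^{-1} x M_g$. Substituting $x = \overline{g}\, r\, \overline{g}^{-1}$ (which still lies in $e\cdot\overline{k\Delta^+}$, since $e$ is $G$-central) and rearranging yields
$$M_g\, r\, M_g^{-1} \;=\; \overline{g}\, r\, \overline{g}^{-1},$$
and plugging this back in gives $\varphi_g(r) = \overline{g}^{-1}(\overline{g}\, r\, \overline{g}^{-1})\overline{g} = r$.

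For the second half, I would just trace $r\otimes 1$ through the composition $\theta_g = \Psi\circ\varphi_g\circ\Phi$. Recall from the proof of Theorem \ref{thm: untwisting map delta} (applied with $H = \Delta^+$ inside the ambient group $H$) that $\Phi$ acts by $\Phi(x\otimes y) = x\,\varepsilon(y)$, so $\Phi(r\otimes 1) = r\,\varepsilon(1) = r$. By the first half, $\varphi_g(r) = r$. Finally, $\Psi$ is the inverse of $\Phi$ and sends $e\cdot\overline{h}$ to $\delta(h)\otimes h\Delta^+$; since $\delta$ restricts to the identity on $\Delta^+$ and $h\Delta^+ = 1$ in $N = H/\Delta^+$ for $h\in\Delta^+$, we get $\Psi(r) = r\otimes 1$. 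Chaining these gives $\theta_g(r\otimes 1) = r\otimes 1$.

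There is really no obstacle here — the content of the lemma is essentially the defining property of $M_g$, together with the observation that both $\Phi$ and $\Psi$ act as the identity on the copy of $e\cdot\overline{k\Delta^+}$ sitting inside the tensor product as $e\cdot\overline{k\Delta^+}\otimes 1$. The only minor point of care is keeping track of the convention $x^g = g^{-1}xg$ versus $x^g = gxg^{-1}$ when manipulating $M_g r M_g^{-1}$, but either convention produces the same conclusion after the substitution above.
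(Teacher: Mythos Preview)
Your proof is correct and is exactly what the paper's ``by construction'' means: the paper offers no argument beyond the \qed, and your unwinding of the definitions of $\widetilde{g}$, $\Phi$, and $\Psi$ is precisely the intended verification. The only comment is that your substitution step can be shortened by applying $x^g = x^{M_g}$ directly with $x = M_g r M_g^{-1}\in e\cdot\overline{k\Delta^+}$, which immediately gives $\overline{g}^{-1}(M_g r M_g^{-1})\overline{g} = r$.
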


Next, a computational lemma:

\begin{lem}\label{lem: action of f commutes with delta}
$\delta(h)^g = \delta(h^g)$ for all $h\in H$.
\end{lem}

\begin{proof}
Define $\beta_g: H \to (e\cdot\overline{k\Delta^+})^\times$ by $\beta_g(h) = \delta(h^g)^{-1}\delta(h)^g$. We aim to show that $\beta_g(h) = 1$ for all $h$.

For any $r\in e\cdot\overline{k\Delta^+}$, we have that
\begin{align*}
\begin{split}
r^{\delta(h^g)} &= r^{h^g} \\
& = ((r^{g^{-1}})^h)^g \\
& = ((r^{g^{-1}})^{\delta(h)})^g = r^{\delta(h)^g},
\end{split}
\end{align*}
and so $r^{\beta_g(h)} = r$, i.e. $\beta_g(h)$ is in the centre of $(e\cdot\overline{k\Delta^+})^\times$. So $\beta_g$ is a map from $H$ to $k'^\times$.

Let $h_1, h_2 \in H$. Since $\beta_g(h_1) = \delta(h_1^g)^{-1}\delta(h_1)^g$ is central in $(e\cdot\overline{k\Delta^+})^\times$, in particular it centralises $\delta(h_2^g)^{-1}$, and so
\begin{align*}
\beta_g(h_1h_2) & = \delta((h_1h_2)^g)^{-1} \delta(h_1h_2)^g\\
& = \delta(h_2^g)^{-1}\delta(h_1^g)^{-1}\delta(h_1)^g\delta(h_2)^g\\
& = \boxed{\delta(h_2^g)^{-1}}\,\boxed{\delta(h_1^g)^{-1}\delta(h_1)^g}\,\delta(h_2)^g\\
& = \boxed{\delta(h_1^g)^{-1}\delta(h_1)^g}\,\boxed{\delta(h_2^g)^{-1}}\,\delta(h_2)^g \\
& = \beta_g(h_1) \beta_g(h_2),
\end{align*}
so $\beta_g$ is a group homomorphism $H\to k'^\times$. Furthermore, when $h\in \Delta^+$,
\begin{equation*}
\begin{split}
\beta_g(h) & = \delta(h^g)^{-1}\delta(h)^g \\
& = (e\cdot\overline{h^g})^{-1} (e\cdot\overline{h})^g = 1.
\end{split}
\end{equation*}
So $\Delta^+ \leq \ker \beta_g$, and so $\beta_g$ in fact descends to a homomorphism from $N$ to $k'^\times$. However, by assumption, $N$ is pro-$p$, and $k'^\times$ is a finite $p'$-group, so $\beta_g$ must be trivial.
\end{proof}

\begin{rk}
Of course, this result does not require $k$ to be finite -- the only fact we have used about $k$ is that $k'^\times$ contains no non-trivial pro-$p$ subgroups. But we do not need this generality here.
\end{rk}

Continue to write $\varepsilon(h) = \delta(h)^{-1}\overline{h}$ for all $h\in H$. Then, finally, we can conclude:

\begin{cor}\label{cor: alpha and f-tilde}
$\varepsilon(h)^{\widetilde{g}} = \varepsilon(h^g)$ for all $h\in H$.
\end{cor}

\begin{proof}
We have
\begin{align*}
\varepsilon(h)^{\widetilde{g}} & = \varepsilon(h)^{g} & \text{as } \varepsilon(h) \text{ centralises } M_g \\
& = (\delta(h)^{g})^{-1} \overline{h^{g}} & \\
& = (\delta(h^{g}))^{-1} \overline{h^{g}} & \text{ by Lemma \ref{lem: action of f commutes with delta}} \\
& = \varepsilon(h^{g}),&
\end{align*}
as required.
\end{proof}

Now we can calculate the action of $\theta_g$ on $e\cdot\overline{k\Delta^+}\tensor{k} kN$:

\begin{lem}\label{lem: action of f-tilde}
Given $r\in e\cdot\overline{k\Delta^+}$ and $h\Delta^+ \in N$, we have $$\theta_g(r\otimes h\Delta^+) = r\otimes (h\Delta^+)^{g\Delta^+}.$$
\end{lem}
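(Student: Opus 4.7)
The plan is to unpack the definition $\theta_g = \Psi \circ \varphi_g \circ \Phi$ and chase the element $r \otimes h\Delta^+$ through each of the three maps in turn, using the two preceding lemmas at exactly the points where the $\widetilde{g}$-conjugation interacts non-trivially with the factors.

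First I would apply $\Phi$: by its definition in the proof of Theorem \ref{thm: untwisting map delta}, we have $\Phi(r \otimes h\Delta^+) = r \cdot \varepsilon(h)$. Next, apply $\varphi_g$, which is conjugation by $\widetilde{g}$. Since $\widetilde{g}$ centralises $e\cdot\overline{k\Delta^+}$ by Lemma \ref{lem: f-tilde centralises x} (that is the content of the construction of $\widetilde{g}$, which lies in the preimage of $g$ inside $A_G$), the factor $r$ is untouched, so $\varphi_g(r\cdot\varepsilon(h)) = r \cdot \varepsilon(h)^{\widetilde{g}}$. Now Corollary \ref{cor: alpha and f-tilde} identifies $\varepsilon(h)^{\widetilde{g}} = \varepsilon(h^g)$, giving $\varphi_g(\Phi(r\otimes h\Delta^+)) = r\cdot\varepsilon(h^g)$.

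Finally, I would apply $\Psi$. For the factor $r \in e\cdot\overline{k\Delta^+}$, writing $r$ as a $k$-linear combination of images of elements of $\Delta^+$ and using $\Psi(\overline{g'}) = \delta(g')\otimes g'\Delta^+ = (e\cdot\overline{g'})\otimes 1$ for $g'\in\Delta^+$ (by property (i) of $\delta$), we obtain $\Psi(r) = r \otimes 1$. For the factor $\varepsilon(h^g) = \delta(h^g)^{-1}\overline{h^g}$, a direct computation using the definition of $\Psi$ yields
\[
\Psi(\varepsilon(h^g)) = (\delta(h^g)^{-1}\otimes 1)\cdot(\delta(h^g)\otimes h^g\Delta^+) = 1\otimes h^g\Delta^+.
\]
Multiplying these gives $\Psi(r\cdot\varepsilon(h^g)) = r \otimes h^g\Delta^+ = r\otimes (h\Delta^+)^{g\Delta^+}$, which is the desired formula. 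The only step requiring genuine work is the invocation of Corollary \ref{cor: alpha and f-tilde}, and that work has already been done; everything else is bookkeeping in the definitions of $\Psi$ and $\Phi$.
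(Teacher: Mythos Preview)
Your proof is correct and follows essentially the same approach as the paper: unwind $\theta_g = \Psi\circ\varphi_g\circ\Phi$, use Lemma \ref{lem: f-tilde centralises x} and Corollary \ref{cor: alpha and f-tilde} to simplify the conjugation step, and then apply $\Psi$. The only cosmetic difference is that in the final step the paper recognises $r\,\varepsilon(h^g)$ as $\Phi(r\otimes h^g\Delta^+)$ and invokes $\Psi\Phi=\mathrm{id}$, whereas you compute $\Psi(r\,\varepsilon(h^g))$ directly from the definition --- both arrive at the same place with the same amount of work.
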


\begin{proof}
\begin{align*}
\theta_g(r\otimes h\Delta^+) & = \Psi(\varphi_g(\Phi(r\otimes h\Delta^+))) & \\
& = \Psi(\varphi_g(r\varepsilon(h))) & \text{by definition} \\
& = \Psi(r^{\widetilde{g}} \varepsilon(h)^{\widetilde{g}}) & \\
& = \Psi(r \varepsilon(h^g)) & \text{by Corollary \ref{cor: alpha and f-tilde} and Lemma \ref{lem: f-tilde centralises x}} \\
& = \Psi(\Phi(r\otimes h^g \Delta^+)) & \text{by definition} \\
& = r\otimes h^g \Delta^+ & \text{by Theorem \ref{thm: untwisting map delta}} \\
& = r\otimes (h\Delta^+)^{g\Delta^+}. &\qedhere
\end{align*}
\end{proof}

We will need one final definition.

\begin{defn}\label{defn: standard cp}
Let $G$ be a compact $p$-adic analytic group, $H$ an open normal subgroup, $F = G/H$, $k$ a commutative pseudocompact ring, and $I$ a $G$-stable ideal of $kH$. Suppose we are given elements $x_i\in (kG/IkG)^\times$ such that
$$kG/IkG = \bigoplus_{i=1}^m (kH/I)x_i$$
is a decomposition of $kG/IkG$ as a $kH/I$-module, or equivalently,
$$\overline{F} := \{x_1, \dots, x_m\} \subseteq (kG/IkG)^\times$$
is a basis for a crossed product decomposition $kG/IkG = kH/I * F$. We will say that this decomposition is \emph{standard} if $\overline{F}$ is the image of a transversal of $H$ in $G$ under the natural map $G\to (kG/IkG)^\times$; that is, we have elements $g_1, \dots, g_m\in G$ such that $F = \{H g_1, \dots, H g_m\}$, and $x_i$ is the image of $g_i$ for all $1\leq i\leq m$.
\end{defn}

Now finally we can prove the main theorem of this subsection. We continue to use the notation established at the beginning of this subsection: $e$ is centralised by $G$, $H/\Delta^+$ is pro-$p$, and the isomorphisms $\psi$ and $\Psi$ are fixed.

Fix also a crossed product decomposition
\begin{equation}\label{eqn: fixed cp}
k'[[G/\Delta^+]] = k'[[H/\Delta^+]] \cp{\sigma}{\tau} (G/H)\tag{$\ddagger$}
\end{equation}
which is \emph{standard} in the sense of Definition \ref{defn: standard cp}.

\begin{thm}\label{thm: untwisted crossed product}
Notation as above. Then there exists
$$\alpha\in Z^2_{\sigma}\Big(G/H,\; Z(k'[[H/\Delta^+]]^\times)\Big)$$
such that $\Psi$ extends to an isomorphism
\begin{equation*}
\tilde{\Psi}: e\cdot\overline{kG} \to e\cdot\overline{k\Delta^+}\tensor{k'} \big(k'[[G/\Delta^+]]\big)_\alpha,
\end{equation*}
where the 2-cocycle twist $(k'[[G/\Delta^+]])_\alpha$, as defined in Definition \ref{defn: cocycle twist}, is taken with respect to the standard crossed product decomposition (\ref{eqn: fixed cp}) above.

Hence $\psi$ also extends to an isomorphism
\begin{equation*}
\tilde{\psi}: e\cdot\overline{kG} \to M_t\Big(\big(k'[[G/\Delta^+]]\big)_\alpha\Big).
\end{equation*}
\end{thm}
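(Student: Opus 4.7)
The plan is to transfer the standard crossed product decomposition $kG = kH*(G/H)$ across the isomorphism $\Psi$, rebase it using the elements $\widetilde{g}$ from equation \eqref{eqn: g-tilde}, and then identify the resulting crossed product as a central $2$-cocycle twist of the decomposition \eqref{eqn: fixed cp}.

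Concretely, fix a transversal $\{g_i\}$ for $H$ in $G$, so that $kG$ is free as a left $kH$-module on $\{g_i\}$. Applying $q:kG\to e\cdot\overline{kG}$ and then rescaling each $\overline{g_i}$ by $M_{g_i}^{-1}\in(e\cdot\overline{k\Delta^+})^\times$ yields a new basis $\{\widetilde{g_i}\}$ for $e\cdot\overline{kG}$ as a free left $e\cdot\overline{kH}$-module, hence a crossed product decomposition
$$e\cdot\overline{kG} = e\cdot\overline{kH}\cp{\sigma'}{\tau'}(G/H),$$
in which $\sigma'(f_i)$ (with $f_i := g_iH$) is conjugation by $\widetilde{g_i}$ and $\tau'$ is determined by the products $\widetilde{g_i}\widetilde{g_j}$. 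Transporting across $\Psi$ then produces a crossed product decomposition of $R := e\cdot\overline{k\Delta^+}\tensor{k'}k'[[H/\Delta^+]]$ with twisting $\Psi(\tau')$. The key input is Lemma \ref{lem: action of f-tilde}: since $k'^\times$ has order coprime to $p$, it contains no non-trivial pro-$p$ subgroups, so the lemma applies and identifies the transported action at $f_i$ with the automorphism of $R$ that is trivial on the first tensor factor and acts as $\sigma(f_i)$ on the second, where $\sigma$ is the action from \eqref{eqn: fixed cp}. This action is precisely $\sigma^\sharp := \id\otimes\sigma$, which makes $e\cdot\overline{k\Delta^+}\tensor{k'}k'[[G/\Delta^+]]$ into the crossed product $R\cp{\sigma^\sharp}{1\otimes\tau}(G/H)$.

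Hence $\Psi(\tau')$ and $1\otimes\tau$ are two twistings for the same action $\sigma^\sharp$ on $R$. By identity \eqref{eqn: passman sigma}, their ratio induces the trivial automorphism of $R$ by conjugation, hence is central in $R$; and since $e\cdot\overline{k\Delta^+}\cong M_t(k')$ is central simple over $k'$, we have $Z(R) = 1\otimes Z(k'[[H/\Delta^+]])$. Setting
$$\alpha(f_i, f_j) := (1\otimes\tau(f_i,f_j))^{-1}\Psi(\tau'(f_i,f_j)) \in Z(k'[[H/\Delta^+]]^\times),$$
the argument of the lemma preceding Definition \ref{defn: cocycle twist} then shows $\alpha\in Z^2_\sigma(G/H, Z(k'[[H/\Delta^+]]^\times))$, and the transported crossed product is precisely $e\cdot\overline{k\Delta^+}\tensor{k'}(k'[[G/\Delta^+]])_\alpha$, yielding $\tilde\Psi$. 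Composing with the matrix isomorphism of Lemma \ref{lem: definition of e}(iii) then produces $\tilde\psi$.

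The substantive work has already been distilled into Lemma \ref{lem: action of f-tilde}; the main remaining obstacle is to locate $\Psi(\tau')$ precisely enough inside $R$ to extract the cocycle $\alpha$, which is achieved by exploiting the $k'$-central simplicity of $e\cdot\overline{k\Delta^+}$.
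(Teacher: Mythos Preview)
Your proof is correct and follows the same strategy as the paper's: rebase the crossed product $e\cdot\overline{kG}=e\cdot\overline{kH}*(G/H)$ on the elements $\widetilde{g_i}$, use Lemma~\ref{lem: action of f-tilde} to identify the resulting action with $\id\otimes\sigma$, and conclude that the twisting differs from that of \eqref{eqn: fixed cp} by a central $2$-cocycle. The one packaging difference is in how the matrix factor is peeled off: the paper works inside the centraliser $Z_G\subseteq e\cdot\overline{kG}$ of a full set of matrix units for $e\cdot\overline{k\Delta^+}\cong M_t(k')$ (via \cite[6.1.5]{passmanASGR}), noting that each $\widetilde{x_i}$ already lies in $Z_G$ so that $Z_G=\bigoplus_i\widetilde{x_i}Z_H$ is itself a crossed product over $Z_H\cong k'[[H/\Delta^+]]$; you instead carry the whole tensor product along and invoke the $k'$-central simplicity of $e\cdot\overline{k\Delta^+}$ at the end to force the cocycle into $1\otimes Z(k'[[H/\Delta^+]])$. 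These are equivalent manoeuvres.
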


\begin{proof}
We know that $e\cdot\overline{k\Delta^+} \cong M_t(k')$ for some $t$ and $k'$ by Lemma \ref{lem: definition of e}(iii), and so $e\cdot\overline{k\Delta^+}$ contains a set $\{e_{ij}\}_{i,j=1}^t$ of $t^2$ matrix units. Set
\begin{equation*}
Z_H := Z_{e\cdot\overline{kH}}\Big(\{e_{ij}\}\Big) \subseteq e\cdot\overline{kH},
\end{equation*}
the centraliser of all of these matrix units, and likewise $Z_G \subseteq e\cdot\overline{kG}$ and $Z_{\Delta^+} \subseteq e\cdot\overline{k\Delta^+}$. Then the statement and proof of \cite[6.1.5]{passmanASGR} show that $$e\cdot\overline{kH} \cong e\cdot\overline{k\Delta^+} \tensor{Z_{\Delta^+}} Z_H$$ and $$e\cdot\overline{kG} \cong e\cdot\overline{k\Delta^+} \tensor{Z_{\Delta^+}} Z_G.$$

Since $e\cdot\overline{k\Delta^+} \cong M_t(k')$, it is clear that $Z_{\Delta^+} \cong k'$, the diagonal copy of $k'$ inside $M_t(k')$. Using the isomorphism $\Psi$ of Theorem \ref{thm: we can find a delta when G/Delta^+ is pro-p}, we can also understand the structure of $Z_H$:
\begin{align*}
\Psi(Z_H) = Z_{\Delta^+} \tensor{k} k[[H/\Delta^+]] \cong k'[[H/\Delta^+]],
\end{align*}
so $\Psi$ restricts to an explicit isomorphism $Z_H \to k'[[H/\Delta^+]]$. Now we would like to understand the structure of $Z_G$.

As $F := G/H$ is finite, and the crossed product (\ref{eqn: fixed cp}) is standard, we have $g_1, \dots, g_n\in G$ whose images form the basis of (\ref{eqn: fixed cp}). For each $g_i\in G$, form  $\widetilde{g}_i\in Z_G^\times$ as in equation (\ref{eqn: g-tilde}). Then $e\cdot\overline{kG}$ is a free $e\cdot\overline{kH}$-module of rank $n$: $e\cdot\overline{kG}$ can be written as the internal direct sum $$e\cdot\overline{kG} = \bigoplus_{i=1}^n (e\cdot\overline{kH})\widetilde{g_i}.$$ Intersecting both sides of this equation with $Z_G$ gives $$Z_G = \bigoplus_{i=1}^n Z_H\widetilde{g_i},$$ showing that $Z_G$ is a crossed product $Z_H * F$, and is therefore isomorphic to $k'[[H/\Delta^+]] * F$. Lemma \ref{lem: action of f-tilde} may now be restated to say that the map $\theta_{g_i}$, i.e. conjugation on the right by $\widetilde{g_i}$, is precisely the automorphism $\sigma(g_i)$, and hence this $k'[[H/\Delta^+]]*F$ is just a central 2-cocycle twist of the decomposition (\ref{eqn: fixed cp}) of $k'[[G/\Delta^+]]$. This is the map $\tilde{\Psi}$, and the map $\tilde{\psi}$ then also follows from Lemma \ref{lem: definition of e}(iii).
\end{proof}

\textit{Proof of Theorem B.} By Lemma \ref{lem: interpretation of G centralising e}, $\mathfrak{M}\cap kH$ is a prime ideal of $kH$, so we are in the situation of Theorem \ref{thm: untwisted crossed product}, from which Theorem B follows immediately.\qed

Retain the above notation, and let $A$ be an ideal of $kH$ with $\mathfrak{M}\subseteq A$. Continuing as before to write $q:kG\to e\cdot\overline{kG}$ for the natural quotient map, we see by Theorem \ref{thm: we can find a delta when G/Delta^+ is pro-p} that $\psi\circ q(A) = M_t(\mathfrak{a})$ for some ideal $\mathfrak{a}$ of $k'[[H/\Delta^+]]$. In this case, we have:

\begin{cor}\label{cor: G-stable ideals}
The following are equivalent.
\begin{itemize}
\item[(i)] $A$ is $G$-stable as an ideal of $kH$.
\item[(ii)] $\mathfrak{a}$ is stable as an ideal of $k'[[H/\Delta^+]]$ under the $(G/\Delta^+)$-action induced by conjugation inside $k'[[G/\Delta^+]]$.
\item[(iii)] $\mathfrak{a}$ is stable as an ideal of $k'[[H/\Delta^+]]$ under the $(G/\Delta^+)$-action induced by conjugation inside $(k'[[G/\Delta^+]])_\alpha$.
\end{itemize}
Moreover, when these conditions hold, we have
$$\tilde{\psi}\circ q(AkG) = M_t\Big(\mathfrak{a}\; \big(k'[[G/\Delta^+]]\big)_\alpha\Big).$$
\end{cor}

\begin{proof}
The equivalence of statements (ii) and (iii) is clear since, by definition, the conjugation action of $G/\Delta^+$ on the ring $k'[[G/\Delta^+]]$ is the same as the conjugation action of $G/\Delta^+$ on the ring $(k'[[G/\Delta^+]])_\alpha$. The equivalence of (i) and (ii) follows easily from Lemma \ref{lem: action of f-tilde}. Then
\begin{align*}
\tilde{\psi}\circ q(AkG) &= \big(\tilde{\psi}\circ q(A)\big)\cdot \big(\tilde{\psi}\circ q(kG)\big)\\
&= M_t(\mathfrak{a})\cdot M_t\Big(\big(k'[[G/\Delta^+]]\big)_\alpha\Big)\\
&= M_t\Big(\mathfrak{a}\; \big(k'[[G/\Delta^+]]\big)_\alpha\Big).\qedhere
\end{align*}
\end{proof}

\textit{Proof of Theorem C.} This follows from (i) Proposition \ref{propn: corresps preserve primality}, (ii) Corollary \ref{cor: G-stable ideals} and (iii) Theorem \ref{thm: we can find a delta when G/Delta^+ is pro-p}(iii) (or Proposition \ref{propn: untwisted faithful primes remain faithful primes}).

\section{The case in which $e$ is not $G$-invariant}\label{section: matrix units}

From section \ref{section: untwisting theorem} onwards, we have usually stipulated a stronger condition than in Lemma \ref{lem: definition of e}, namely that the conjugation action of $G$ on $\overline{kG}$ should fix the central idempotent $e\in\overline{k\Delta^+}$. However, in general, $e$ will have some non-trivial (but finite) $G$-orbit, so it will only make sense to consider $f\cdot\overline{kG}$, where $f = \cpisum{e}{G}$. In this section, we show that we have not lost much generality by making this assumption.

\subsection{Orbits of idempotents}

We begin with a well-known matrix units lemma.

\textbf{Throughout this subsection, we fix the following notation.} Let $R$ be a ring, and let $1 = e_1 + e_2 + \dots + e_n$ be a decomposition of $1$ into a sum of orthogonal idempotents. Let $G$ be a subgroup of the group of units of $R$, and assume that $G$ permutes the set $\{e_1, e_2, \dots, e_n\}$ transitively by conjugation: explicitly, fix elements $1 = g_1, g_2, \dots, g_n\in G$ such that $g_i e_1 = e_i g_i$, and set $g_{ij} = g_ig_j^{-1}$. Finally, for each $1\leq i,j\leq n$, write $R_{ij} = e_i R e_j$, and write $S = R_{11}$.

Recall, from Lemma \ref{lem: peirce and e-bimods}(ii), that in fact
$$R = \bigoplus_{i,j=1}^n R_{ij}$$
as an additive group, and recall the suggestive matrix notation of Remark \ref{rk: peirce decomp is a matrix ring}: we now interpret this as an honest matrix ring.

\begin{lem}\label{lem: MUL}
Define the function $\mu: R\to M_n(S)$ as follows: for each $1\leq i,j\leq n$, and for any $x\in R_{ij}$, set $\mu(x)$ equal to the matrix whose $(i,j)$-entry is $g_i x g_j^{-1}$, and whose other entries are all $0$; then extend linearly to all of $R$.
\begin{enumerate}[label=(\roman*)]
\item $\mu$ is an isomorphism of rings.
\item Let $I$ be an ideal of $R$. Then $\mu(I) = M_n(J)$, where $J = e_1 I e_1$.
\item Let $R'$ be a subring of $R$ containing the elements $e_1, \dots, e_n$, and suppose that $G\leq R'^\times$. Then $\mu(R') = M_n(S')$, where $S' = e_1 R' e_1$.
\end{enumerate}
\end{lem}

\begin{proof}
$ $

\begin{enumerate}[label=(\roman*)]
\item This is proved as \cite[6.1.6]{passmanASGR}.
\item Write $\pi$ for the $R$-bimodule projection $M_n(S) \to S$ sending $x$ to its $(1,1)$-entry, and observe that the restriction of $\pi\circ\mu$ to $R_{11}$ is simply the identity. So, by Lemma \ref{lem: peirce and e-bimods}(i), (ii), we can calculate that $\pi\circ\mu(I) = J$.

Now, $M_n(S)$ is a matrix ring, and so all of its ideals are of the form $M_n(J')$ for some ideal $J'$ of $S$. In particular, let $J'$ be such that $\mu(I) = M_n(J')$. Then $\pi(\mu(I)) = J'$, and hence $J' = J$.
\item This follows trivially by applying (i) with $R'$ in place of $R$.\qedhere
\end{enumerate}
\end{proof}

\textit{Proof of Theorem D.} By Lemma \ref{lem: definition of e}, $\mathfrak{p}$ is the full preimage in $k\Delta^+$ of $(1-e)\overline{k\Delta^+}$ for some $e\in \cpi(\mathfrak{M})$. Set $f = \cpisum{e}{G}$. Then, by Lemma \ref{lem: definition of e}, we have $kG/\mathfrak{M} \cong f\cdot\overline{kG}$, and by Lemma \ref{lem: MUL}, this is isomorphic (under $\mu$) to $M_r(e\cdot\overline{kG}\cdot e)$. It is easy to see that $e\cdot\overline{kG}\cdot e \cong e\cdot\overline{kG_1}$, where $G_1$ is the open subgroup of $G$ stabilising $e$ and hence $\mathfrak{p}$. By another application of Lemma \ref{lem: definition of e}, it is clear that $e\cdot\overline{kG_1} \cong kG_1/\mathfrak{N}$, where $\mathfrak{N}$ is the full preimage in $kG_1$ of $(1-e)\overline{kG_1}$, a minimal prime by Lemma \ref{lem: definition of e}(ii). Now composing these isomorphisms in the obvious way gives the desired isomorphism $\widetilde{\mu}: kG/\mathfrak{M} \to M_r(kG_1/\mathfrak{N})$.\qed

\subsection{A fourth ideal correspondence and primality}

\begin{defn}\label{defn: corresps 2}
Adopt Setup \ref{setup: k, G, bar}, and take $\mathfrak{M}, \mathfrak{N}, r, G_1$ as in the statement and proof of Theorem D. Now the isomorphism $\widetilde{\mu}: kG/\mathfrak{M} \to M_r(kG_1/\mathfrak{N})$ of Theorem D gives us a final ideal correspondence, in addition to those given in Definition \ref{defn: corresps}. We choose to define it in the following equivalent way:
\begin{itemize}
\item[(IV)]
We always have the ideal correspondence
\begin{equation*}
\begin{minipage}{0.9\textwidth}
\xymatrix{
\mathcal{I}_{\mathfrak{M}}(kG)\ar@{<->}[r]&
\mathcal{I}_{\mathfrak{N}}(kG_1)
}
\end{minipage}
\end{equation*}
defined to be the unique correspondence that makes the following diagram commute:

\hfil
\begin{equation}\label{eqn: (IV)}
\xymatrix{
\mathcal{I}_{\mathfrak{M}}(kG) \ar@{<..>}[d]_{\scriptsize \txt{(IV)}}\ar@{<->}[rr]^-{\scriptsize \txt{(I)}\vphantom{M}\vphantom{M}}
&&
\mathcal{I}(f\cdot\overline{kG}) \ar@{->}[d]^-{\mu}
\\
\mathcal{I}_{\mathfrak{N}}(kG_1)\ar@{<->}[r]_-{\scriptsize \vphantom{M}\txt{(I)}}
&
\mathcal{I}(e\cdot\overline{kG_1})
&
\mathcal{I}(M_n(e\cdot\overline{kG_1})),\ar@{<->}[l]^-{\scriptsize \vphantom{M}\txt{(III)}}
\\}
\end{equation}

where $\mu: f\cdot\overline{kG}\to M_n(e\cdot\overline{kG_1})$ is the isomorphism of Lemma \ref{lem: MUL}.
\end{itemize}
\end{defn}

It is now easy to deduce the following.

\begin{propn}\label{propn: IV preserves primality}
The one-to-one ideal correspondence (IV) preserves primality.
\end{propn}

\begin{proof}
Let $P$ be an ideal of $kG$ containing $\mathfrak{M}$, and $Q$ an ideal of $kG_1$ containing $\mathfrak{N}$, and suppose that $P$ and $Q$ correspond under (IV) as above. We will show that $P$ is prime if and only if $Q$ is prime by simply following them around the diagram (\ref{eqn: (IV)}).

By two applications of correspondence (I) (Definition \ref{defn: corresps}), which preserves primality by Proposition \ref{propn: corresps preserve primality}, it suffices to show that $f\cdot\overline{P}\lhd f\cdot\overline{kG}$ is prime if and only if $e\cdot\overline{Q}\lhd e\cdot\overline{kG_1}$ is prime. But, writing $\mu$ for the isomorphism $f\cdot\overline{kG}\to M_n(e\cdot\overline{kG_1})$ of Lemma \ref{lem: MUL}, we see that $f\cdot\overline{P}$ is prime if and only if $\mu(f\cdot\overline{P}) = M_n(e\cdot\overline{Q})$ is prime. Now it follows from Morita equivalence \cite[3.5.5, 3.5.9]{MR} that $M_n(e\cdot\overline{Q})$ is prime if and only if $e\cdot\overline{Q}$ is prime.
\end{proof}

\subsection{Control by subgroups}

\textbf{Notation for this subsection.} Adopt Setup \ref{setup: k, G, bar}, and let $\mathfrak{M}$ be a minimal prime ideal of $kG$. Take $e\in\cpi(\mathfrak{M})$, write $f = \cpisum{e}{G}$, and let $e = e_1, \dots, e_n$ be the $G$-orbit inside $R = f\cdot\overline{kG}$ of mutually orthogonal nonzero idempotents summing to $f = 1_R$. Let $G_1$ be the centraliser inside $G$ of $e$, and $\mathfrak{N} = (1-e)\overline{kG_1}$.

Fix also a closed normal subgroup $H$ of $G$, and set $f' = f'_1 = \cpisum{e}{H}$. Write $\mathcal{O}_G(e)$ for the $G$-orbit of $e$, and suppose that this breaks up into a disjoint union of $H$-orbits $\mathcal{O}_H(e_1) \cup \dots \cup \mathcal{O}_H(e_s)$. For each $1\leq i\leq s$, write $f'_i = \cpisum{e_i}{H}$, i.e. the sum of the elements of $\mathcal{O}_H(e_i)$.

\begin{lem}\label{lem: H-orbits are a mutually orthogonal G-orbit}
The $f'_i$ are mutually orthogonal, and form a $G$-orbit. The sum $f'_1 + \dots + f'_s = f$, and $H$ centralises each $f'_i$.
\end{lem}

\begin{proof}
The idempotents $e_j$ are all mutually orthogonal, and the $H$-orbits $\mathcal{O}_H(e_i)$ are all pairwise disjoint, so an easy calculation shows that the $f'_i$ are mutually orthogonal. It is also trivial to see that conjugation by elements of $H$ fixes $\mathcal{O}_H(e_i)$ for each $i$, and so $H$ centralises each $f'_i$.

To show that the $f'_i$ form a $G$-orbit, it is enough to show that the $\mathcal{O}_H(e_i)$ form a $G$-orbit. But, given some $g\in G$, we have that $(\mathcal{O}_H(e_i))^g = \mathcal{O}_{H^g}(e_i^g)$, which is equal to $\mathcal{O}_{H}(e_k)$ for some $k$, as $H$ is normal in $G$ and the $e_j$ form a $G$-orbit.

Finally, $f'_1 + \dots + f'_s$ is the sum of the elements of the $\mathcal{O}_H(e_i)$ for all $1\leq i\leq s$. But by the above we know that this is just the sum of $G$-conjugates of $e$, i.e. $\cpisum{e}{G} = f$.
\end{proof}

Recall the ideal correspondence (IV) of Definition \ref{defn: corresps 2}. Set $G_1$ to be the centraliser in $G$ of $f'_1$, and write $\mathfrak{N}$ and $Q$ for the prime ideals of $kG_1$ corresponding under (IV) to $\mathfrak{M}$ and $P$ respectively. Write also $\mu: f\cdot\overline{kG} \to M_s(f'_1\cdot\overline{kG_1})$ for the isomorphism of Lemma \ref{lem: MUL} obtained from the idempotent decomposition of $f$ given by Lemma \ref{lem: H-orbits are a mutually orthogonal G-orbit}.

\begin{propn}\label{propn: IV preserves control}
$P$ is controlled by $H$ if and only if $Q$ is controlled by $H_1$.
\end{propn}

\begin{proof}
By applying the isomorphism $\mu$, it is clear that $f\cdot\overline{P}$ is controlled by $f\cdot\overline{kH}$ if and only if $\mu(f\cdot\overline{P})$ is controlled by $\mu(f\cdot\overline{kH})$. But by Lemma \ref{lem: MUL}(ii), (iii), we may calculate $\mu(f\cdot\overline{P}) = M_s(f'_1\cdot\overline{Q})$ and $\mu(f\cdot\overline{kH}) = M_s(f'_1\cdot\overline{kH_1})$. Hence, by Morita equivalence (Proposition \ref{propn: corresps preserve control}(III)), $f\cdot\overline{P}$ is controlled by $f\cdot\overline{kH}$ if and only if $f'_1\cdot\overline{Q}$ is controlled by $f'_1\cdot\overline{kH_1}$. Now we are done by two applications of Proposition \ref{propn: corresps preserve control}(I).
\end{proof}

\subsection{Faithfulness}

Adopt Setup \ref{setup: k, G, bar}, and let $e\in \cpi(\mathfrak{M})$ and $f = \cpisum{e}{G}$. Write $e = e_1, \dots, e_r$ for the $G$-orbit of $e$ inside the ring $R = f\cdot\overline{kG}$, and note that they sum to $f = 1_R$. Write $q: kG\to f\cdot\overline{kG} = R$ for the natural quotient map.

Let $A$ be an ideal of $R$. Recall Definition \ref{defn: dagger}; we give an analogue for the current, slightly more general situation.

\begin{defn}\label{defn: dagger in peirce stuff}
Let $A$ be an ideal of $R$. Then we may define the \emph{$G$-unfaithful part} of $A$ as
$$A^\dagger = q^{-1}(A+1_R)\cap G = \ker(G\to (R/A)^\times).$$
\end{defn}

We show first that this is indeed the appropriate analogue of the earlier definition.

\begin{lem}\label{lem: relating old and new dagger}
Let $I$ be an ideal of $kG$ containing $\mathfrak{M}$, and recall the definition of $I^\dagger$ from Definition \ref{defn: dagger}. Set $R = f\cdot\overline{kG}$ and $A = q(I)\lhd R_i$, and write $A^\dagger$ for the subgroup of $G$ defined in Definition \ref{defn: dagger in peirce stuff}. Then $A^\dagger = I^\dagger$.
\end{lem}

\begin{proof}
If $g\in I^\dagger$, i.e. $g-1\in I$, then clearly $q(g-1)\in A$, so $g\in A^\dagger$. Conversely, suppose $g\in A^\dagger$, so that there exists some $x\in\ker q = (1-f)\overline{kG}$ with $g-1+x\in I$. But $I$ contains $\mathfrak{M} = (1-f)\overline{kG}$, so in particular $I$ also contains $x$, showing that $g-1\in I$.
\end{proof}

Now recall that $R_{ii} = e_i\cdot\overline{kG}\cdot e_i$ may be naturally identified with $e_i\cdot\overline{kG_i}$, where $G_i$ is the centraliser inside $G$ of $e_i\in R$. Write $q_i: kG_i\to e_i\cdot\overline{kG_i} = R_{ii}$ for the natural quotient map.

The following lemma shows the relationship between $A^\dagger$ and $B_i^\dagger$.

\begin{lem}\label{lem: relating A^dagger and B^dagger}
Let $A$ be an ideal of $R$, and take $A^\dagger$ to be its $G$-unfaithful part, as in Definition \ref{defn: dagger in peirce stuff}. Let $B_i = q_i(A)$ be an ideal of $R_{ii}$, and take $B_i^\dagger$ to be its $G_i$-unfaithful part likewise.

\begin{itemize}
\item[(i)] If $A\neq R$, then $A^\dagger \leq \bigcap_{i=1}^r G_i$.
\item[(ii)] If $A\neq R$, then $A^\dagger = \bigcap_{i=1}^r B_i^\dagger$.
\item[(iii)] The $\{B_i^\dagger\}$ are a $G$-orbit under conjugation.
\end{itemize}
\end{lem}

\begin{proof}
$ $

\begin{itemize}
\item[(i)]
Suppose not: then there exist some $i\neq j$ and some $g\in A^\dagger$ with $e_i q(g) = q(g) e_j$, so that $e_i q(g) e_i = 0$. Then
$$q(g-1) \in A \implies e_iq(g-1)e_i\in A \implies e_i\in A,$$
but then by conjugating by elements of $q(G)$ we see that $e_l\in A$ for all $l$, and so $1 = \sum_l e_l \in A$, so $A = R$, which is a contradiction.

\item[(ii)]
$\boxed{\leq}$ Fix $i$. If $q(g-1) \in A$, then $g\in G_i$ by part (i), so $e_iq(g-1)e_i\in B_i$.

$\boxed{\geq}$ Let $g\in \bigcap_i B_i^\dagger$, so that $e_i q_i(g) e_j = q_i(g) e_i e_j = 0\in A$ for all $i\neq j$, and $e_iq_i(g-1)e_i\in B_i$ $\subseteq A$ for all $i$. Then $q(g-1) = \sum_{i\neq j} e_i q(g) e_j + \sum_i e_iq_i(g-1)e_i \in A$.

\item[(iii)]
$A$ is $q(G)$-stable, so if $q(g)^{-1}e_iq(g) = e_j$ then $q(g)^{-1}B_iq(g) = q(g)^{-1}(e_i A e_i)q(g) = e_j A e_j = B_j$. Likewise, $g^{-1}G_ig = g^{-1}(e_i G e_i)g$ $ = e_j G e_j = G_j$. It follows from Definition \ref{defn: dagger in peirce stuff} that $g^{-1}(B_i^\dagger)g = B_j^\dagger$.\qedhere
\end{itemize}
\end{proof}

Combining these results, we get the following.

\begin{propn}\label{propn: IV preserves faithfulness}
Let $\mathfrak{M}\leq P\lhd kG$ and $\mathfrak{N}\leq Q\lhd kG_1$ be prime ideals which correspond under (IV). Then $P^\dagger = \bigcap_{g\in G} (Q^\dagger)^g$.
\end{propn}

\begin{proof}
Setting $A = q(P)$ and $B_i = q_i(P)$, we see from Lemma \ref{lem: relating old and new dagger} that $A^\dagger = P^\dagger$ and $B_1^\dagger = Q^\dagger$. Now the result follows from Lemma \ref{lem: relating A^dagger and B^dagger}.
\end{proof}

\textit{Proof of Theorem E.} Part (i) follows from Proposition \ref{propn: IV preserves primality}, part (ii) from Proposition \ref{propn: IV preserves control}, and part (iii) from Proposition \ref{propn: IV preserves faithfulness}.\qed

\section*{Acknowledgements}

This work was originally completed as part of my D.Phil. thesis at the University of Oxford. I would like to thank my supervisor, Konstantin Ardakov, without whose support this work would not have been possible. I would also like to express my gratitude to the University and to EPSRC for the funding that allowed me to complete this project; and to the University, Brasenose College, and Queen Mary University of London for their hospitality. I am also grateful to an anonymous reviewer for detailed feedback on the exposition of this material in an earlier draft, which helped me to significantly improve its presentation.

\bibliography{../../biblio/biblio-all}

\begin{thebibliography}{10}

\bibitem{aljadeff-rob}
E.~Aljadeff and D.J.S. Robinson.
\newblock Semisimple algebras, {G}alois actions and group cohomology.
\newblock {\em Journal of Pure and Applied Algebra}, 94:1--15, 1994.

\bibitem{ardakovGMJ}
K.~Ardakov.
\newblock Localisation at augmentation ideals in {I}wasawa algebras.
\newblock {\em Glasgow Mathematical Journal}, 48(2):251--267, 2006.

\bibitem{ardakovContemp}
K.~Ardakov.
\newblock The controller subgroup of one-sided ideals in completed group rings.
\newblock {\em Contemporary Mathematics}, 562:11--26, 2012.

\bibitem{ardakovInv}
K.~Ardakov.
\newblock Prime ideals in nilpotent {I}wasawa algebras.
\newblock {\em Inventiones mathematicae}, 190(2):439--503, 2012.

\bibitem{ardakovbrown}
K.~Ardakov and K.A. Brown.
\newblock Primeness, semiprimeness and localisation in {I}wasawa algebras.
\newblock {\em Trans. Amer. Math. Soc.}, 359:1499--1515, 2007.

\bibitem{ardakovJAG}
K.~Ardakov and S.J. Wadsley.
\newblock Characteristic elements for $p$-torsion {I}wasawa modules.
\newblock {\em J. Algebraic Geometry}, 15:339--377, 2006.

\bibitem{brumer}
A.~Brumer.
\newblock Pseudocompact algebras, profinite groups and class formations.
\newblock {\em Journal of Algebra}, 4:442--470, 1966.

\bibitem{DDMS}
J.D. Dixon, M.P.F. du~Sautoy, A.~Mann, and D.~Segal.
\newblock {\em Analytic Pro-$p$ Groups}.
\newblock Cambridge University Press, 1999.

\bibitem{maclane}
S.~Mac~Lane.
\newblock {\em Homology}.
\newblock Springer, 1963.

\bibitem{MR}
J.C. McConnell and J.C. Robson.
\newblock {\em Noncommutative Noetherian Rings}.
\newblock American Mathematical Society, 2001.

\bibitem{passmanASGR}
Donald~S. Passman.
\newblock {\em Algebraic Structure of Group Rings}.
\newblock John Wiley \& Sons, 1977.

\bibitem{passmanICP}
Donald~S. Passman.
\newblock {\em Infinite Crossed Products}.
\newblock Academic Press Inc., 1989.

\bibitem{roseblade}
J.E. Roseblade.
\newblock Prime ideals in group rings of polycyclic groups.
\newblock {\em Proc. London Math. Soc.}, 36:385--447, 1978.

\bibitem{stacks-project}
The {Stacks Project Authors}.
\newblock \textit{Stacks Project}.
\newblock \url{http://stacks.math.columbia.edu}, 2016.

\bibitem{warner}
Seth Warner.
\newblock {\em Topological Rings}.
\newblock 178. North-Holland Mathematics Studies, 1993.

\bibitem{woods-struct-of-G}
William Woods.
\newblock On the structure of virtually nilpotent compact $p$-adic analytic
  groups.
\newblock {\em J. Group Theory}, 21(1):165--188, 2018.

\end{thebibliography}
\bibliographystyle{plain}

\end{document}